\providecommand{\tabularnewline}{\\}
\newcommand{\lyxaddress}[1]{
\par {\raggedright #1
\vspace{1.4em}
\noindent\par}
}
\theoremstyle{plain}
\newtheorem{thm}{Theorem}
 \theoremstyle{definition}
  \newtheorem{example}[thm]{Example}
  \theoremstyle{plain}
  \newtheorem{lem}[thm]{Lemma}
  \theoremstyle{plain}
  \newtheorem{cor}[thm]{Corollary}
\DeclareMathOperator{\T}{T}
\DeclareMathOperator{\N}{N}
\DeclareMathOperator{\reg}{reg}
\begin{document}

\title{Statistical inference optimized with respect to the observed sample
for single or multiple comparisons}

\maketitle

\lyxaddress{
\lyxaddress{David R. Bickel\\
Ottawa Institute of Systems Biology\\
Department of Biochemistry, Microbiology, and Immunology\\
Department of Mathematics and Statistics\\
University of Ottawa\\
451 Smyth Road\\
Ottawa, Ontario, K1H 8M5}}
\begin{abstract}
The normalized maximum likelihood (NML) is a recent penalized likelihood
that has properties that justify defining the amount of discrimination
information (DI) in the data supporting an alternative hypothesis
over a null hypothesis as the logarithm of an NML ratio, namely, the
alternative hypothesis NML divided by the null hypothesis NML. The
resulting DI, like the Bayes factor but unlike the p-value, measures
the strength of evidence for an alternative hypothesis over a null
hypothesis such that the probability of misleading evidence vanishes
asymptotically under weak regularity conditions and such that evidence
can support a simple null hypothesis. Unlike the Bayes factor, the
DI does not require a prior distribution and is minimax optimal in
a sense that does not involve averaging over outcomes that did not
occur. Replacing a (possibly pseudo-) likelihood function with its
weighted counterpart extends the scope of the DI to models for which
the unweighted NML is undefined. The likelihood weights leverage side
information, either in data associated with comparisons other than
the comparison at hand or in the parameter value of a simple null
hypothesis. Two case studies, one involving multiple populations and
the other involving multiple biological features, indicate that the
DI is robust to the type of side information used when that information
is assigned the weight of a single observation. Such robustness suggests
that very little adjustment for multiple comparisons is warranted
if the sample size is at least moderate. 
\end{abstract}
\textbf{Keywords:} indirect information; information criteria; information
for discrimination; minimum description length; model selection; multiple
comparison procedure; multiple testing; normalized maximum likelihood;
penalized likelihood; reduced likelihood; weighted likelihood

\newpage{}

\section{\label{sec:Introduction}Introduction}

\subsection{\label{sub:Quantifying-statistical-evidence}Quantifying statistical
evidence}

Many areas of science involve investigations of whether some effect
is present and thus call for statistical methods that assess the evidence
pertaining to whether a null hypothesis or an alternative hypothesis
is closer to the system studied. For example, many experimental biologists
are more interested in whether gene expression levels differ between
control and treatment groups than in the effect size itself. 

Because not all samples are representative of their populations, the
amount of evidence against the null hypothesis is misleadingly high
for some samples. Although the probability of observing such an unrepresentative
sample should decrease as the size of the sample increases, that is
not the case if proximity of a p-value to 0 is interpreted as the
strength of evidence against the null hypothesis.  Indeed, the distribution
of the p-value associated with a simple (point) null hypothesis remains
the same at all sample sizes if the null hypothesis holds, making
the p-value impossible to interpret as a level of evidence apart from
considering the sample size, as \citet{RefWorks:122}, \citet{Blume2003439},
and others have argued; cf. \citet{RefWorks:293} on the sample-size
incoherence of significance testing. \citet{RefWorks:435} defined the lacking property
by calling a measure of evidence \emph{interpretable }if its probability
of misleading evidence vanishes asymptotically. That is, a measure
of evidence satisfies the interpretability condition only if the frequentist
probability of observing a sample that has misleading evidence exceeding
some fixed threshold converges to 0 as the sample size diverges.

Another adverse consequence of treating the p-value as a measure of
evidence is its inability to indicate evidence in favor of a simple
null hypothesis. In general, the amount of information in the data
that favors a simple null hypothesis cannot be quantified by the p-value
since it can only indicate whether there is evidence against it. 

The Bayes factor in principle overcomes the above limitations of
the p-value but poses the notorious problem of specifying the prior
distribution of a nuisance parameter that is not random in the frequentist
sense. Any solution to the problem has practical implications since
the Bayes factor is sensitive to prior specification \citep{RefWorks:183}. 

Subjective prior distributions have the advantage of coherence and
yet are rarely used in data analysis since they depend on arbitrary
choices in prior specification. On the other hand, the improper prior
distributions generated by conventional algorithms cannot be directly
applied to model selection since they would leave the Bayes factor
undefined. That has been overcome to some extent by dividing the data
into training and test samples, with the training samples generating
proper priors for use with test samples, but at the expense of requiring
the specification of training samples and, in the presence of multiple
training samples, a method of averaging \citep{RefWorks:1023}. Further,
the interpretation of the resulting posterior probability is not
clear except perhaps as an approximation to an agent's level of belief
\citep{RefWorks:17}.

\subsection{\label{sub:Optimality-mean}Repeated-sampling optimality}

Since there are many potential measures of evidence, most notably
the Bayes factors defined by different priors, that satisfy the criteria
that a measure of evidence be interpretable and that it can support
a simple null hypothesis, an optimality criterion will be applied
to determine a unique method of hypothesis testing and more general
model selection. Before doing so, that criterion will be distinguished
from standards of optimality in the received framework of statistics,
that of Neyman and Pearson as generalized by Wald.

The goal of minimizing \emph{risk}, the expected loss with respect
to a sampling distribution \citep{Wald1950}, has provided a unified
framework of estimation and testing and, as briefly reviewed in \citet[§3.1]{conditional2009},
has led to recent multiple comparison procedures. However, \citet{RefWorks:1302},
\citet{FraserAncillaries2004a}, \citet{RefWorks:1334}, and other
frequentist statisticians have criticized the framework for promoting
opportunistic trade-offs between hypothetical samples, thereby potentially
misleading scientists and yielding unacceptably pathological procedures.
The main non-Bayesian alternative involves replacing the marginal
sampling distribution with a conditional sampling distribution given
an exact or approximate ancillary statistic \citep[e.g.,][§3.3]{RefWorks:SprottBook2000}.

While conditioning on an ancillary statistic makes the reference distribution
more relevant to inference on the basis of the observed sample \citep{RefWorks:985},
it still does not permit statements about the actual loss incurred.
For example, the confidence level remains the proportion of confidence
intervals corresponding to repeated samples that cover the parameter
of interest. Although the use of exact confidence intervals minimizes
a risk (\citealp{RefWorks:1187}; \citealp{frequentistReasoning}),
it is silent regarding the loss associated with the observed sample.

\subsection{\label{sub:Optimality-observed}Observed-sample optimality}

\subsubsection{Information-theoretic inference}

In order to address the issues outlined above, this paper continues
the development of a new information-theoretic alternative to previous
approaches to statistical inference. The concept of a predictive distribution
will enable defining minimax optimality without repeated-sampling
or posterior-distribution averages. This approach is presented here
largely without the terminology of its origin in universal source
coding \citep{RefWorks:404}.

Consider the observed data vector $x\in\mathcal{X}^{n}$. Let $\mathcal{E}\left(\Omega\right)$
denote the set of all probability density functions on any sample
space $\Omega$, and let $\mathcal{F}=\left\{ f_{\phi}:\phi\in\Phi\right\} \subset\mathcal{E}\left(\mathcal{X}^{n}\right)$
denote a parametric family of density functions on $\mathcal{X}^{n}$
for parameter space $\Phi$. (Herein, the probability densities are
Radon-Nikodym derivatives, reducing to probability masses if $\mathcal{X}$
is countable.) The maximum likelihood estimate of $\phi$, denoted
by $\hat{\phi}\left(x\right)$, is assumed to be unique. 

The \emph{regret} of a predictive density $\bar{f}\in\mathcal{E}\left(\mathcal{X}^{n}\right)$
is the logarithmic loss\begin{eqnarray}
\reg\left(\bar{f},x;\Phi\right) & = & -\log\bar{f}\left(x\right)-\inf_{\phi\in\Phi}\left(-\log f_{\phi}\left(x\right)\right)=\log\frac{f_{\hat{\phi}\left(x\right)}\left(x\right)}{\bar{f}\left(x\right)}\label{eq:untargeted-regret}\end{eqnarray}
for any $x\in\mathcal{X}^{n}$. The \emph{$\mathcal{E}\left(\mathcal{X}^{n}\right)$-optimal
predictive density function} \emph{relative to }$\mathcal{F}$, \begin{equation}
\bar{f}_{0}=\arg\inf_{\bar{f}\in\mathcal{E}\left(\mathcal{X}^{n}\right)}\sup_{u\in\mathcal{X}^{n}}\reg\left(\bar{f},u;\Phi\right),\label{eq:optimality}\end{equation}
while by definition in $\mathcal{E}\left(\mathcal{X}^{n}\right)$,
is not necessarily in $\mathcal{F}$. Rather, $\bar{f}_{0}$ is a
probability density function that represents the entire family $\mathcal{F}$
with a single distribution, much as does a prior predictive density
function. Instead of averaging the members of $\mathcal{F}$ with
respect to a prior distribution, the present definition employs $\mathcal{F}$
in equation \eqref{eq:optimality} for each $u\in\mathcal{X}^{n}$
through the maximization of the likelihood over $\phi\in\Phi$, as
seen by substituting $u$ for $x$ in equation \eqref{eq:untargeted-regret}. 

Originally motivated in the information theory literature by a need
to minimize codelength \citep{RefWorks:404}, equation \eqref{eq:optimality}
defines the type of minimax optimality employed as opposed to the
optimality of Section \ref{sub:Optimality-mean}. (According to the
minimum description length principle, each family of distributions
corresponds to an algorithm of most efficiently encoding the information
in $x$ \citep{RefWorks:374,RefWorks:375,Rissanen2009b}.) The predictive
density function $\bar{f}_{0}$ is optimal in that it solves the minimax
problem involving all $u\in\mathcal{X}^{n}$, and thus for the observed
sample $x\in\mathcal{X}^{n}$, rather than the more usual minimax
problem involving an expectation value over all samples, as in the
standard decision theory of frequentism. The following result \citep{RefWorks:404,RefWorks:374,RefWorks:375},
to be proved in Section \ref{sub:NMWL} for a more general optimization
problem, sheds light on the nature of the optimality considered.
\begin{thm}
\label{thm:NML}If $\int_{\mathcal{X}^{n}}f_{\hat{\phi}\left(u\right)}\left(u\right)du<\infty$,
then the $\mathcal{E}\left(\mathcal{X}^{n}\right)$-optimal predictive
density function relative to $\mathcal{F}$ is\begin{equation}
\bar{f}_{0}\left(\bullet\right)=\bar{f}_{0}\left(\bullet;\Phi\right)=\frac{f_{\hat{\phi}\left(\bullet\right)}\left(\bullet\right)}{\int_{\mathcal{X}^{n}}f_{\hat{\phi}\left(u\right)}\left(u\right)du}.\label{eq:NML}\end{equation}
\end{thm}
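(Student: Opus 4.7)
The plan is to verify Shtarkov's classical minimax identity by the direct route: show that the proposed $\bar{f}_0$ achieves a constant regret across $\mathcal{X}^n$, and then argue that no competitor can do better than this constant in the worst case.

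First, I would set $C = \int_{\mathcal{X}^n} f_{\hat{\phi}(u)}(u)\,du$, which is finite by hypothesis and strictly positive whenever $\mathcal{F}$ is non-degenerate, so that $\bar{f}_0(\bullet) = f_{\hat{\phi}(\bullet)}(\bullet)/C$ is a bona fide element of $\mathcal{E}(\mathcal{X}^n)$. Substituting this $\bar{f}_0$ into the regret expression \eqref{eq:untargeted-regret} yields
\begin{equation*}
\reg(\bar{f}_0, x; \Phi) = \log \frac{f_{\hat{\phi}(x)}(x)}{\bar{f}_0(x)} = \log C
\end{equation*}
for every $x \in \mathcal{X}^n$. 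Hence $\sup_{u \in \mathcal{X}^n} \reg(\bar{f}_0, u; \Phi) = \log C$, a constant that does not depend on $x$. This \emph{equalizer} property is the key structural fact that makes $\bar{f}_0$ a natural minimax candidate.

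Next, to show optimality, I would pick an arbitrary competitor $\bar{f} \in \mathcal{E}(\mathcal{X}^n)$ and argue by contradiction. Suppose $\sup_u \reg(\bar{f}, u; \Phi) < \log C$; then there exists $\varepsilon > 0$ such that $f_{\hat{\phi}(u)}(u) \leq e^{-\varepsilon} C\,\bar{f}(u)$ for all $u \in \mathcal{X}^n$. Integrating this inequality over $\mathcal{X}^n$ and using the fact that $\bar{f}$ is a density gives
\begin{equation*}
C = \int_{\mathcal{X}^n} f_{\hat{\phi}(u)}(u)\,du \leq e^{-\varepsilon} C \int_{\mathcal{X}^n} \bar{f}(u)\,du = e^{-\varepsilon} C,
\end{equation*}
which contradicts $\varepsilon > 0$. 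Therefore $\sup_u \reg(\bar{f}, u; \Phi) \geq \log C = \sup_u \reg(\bar{f}_0, u; \Phi)$, establishing that $\bar{f}_0$ attains the infimum in \eqref{eq:optimality}.

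I do not anticipate a serious obstacle here, since the argument is essentially a one-line integration swap once the equalizer property is in place; the only care required is measure-theoretic housekeeping, namely handling null sets on which $\bar{f}$ might vanish while $f_{\hat{\phi}(\bullet)}(\bullet)$ does not (on such a set the regret is already $+\infty$, so those competitors trivially lose) and confirming that the finiteness hypothesis $C < \infty$ is what makes $\bar{f}_0$ a legitimate density in $\mathcal{E}(\mathcal{X}^n)$. Since the paper promises a more general version of this argument in Section \ref{sub:NMWL} covering weighted likelihoods, I would keep the presentation here deliberately short and note that the weighted case will follow by the same equalizer-plus-integration scheme with $f_{\hat{\phi}(u)}(u)$ replaced by its weighted analogue.
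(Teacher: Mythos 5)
Your proposal is correct and follows essentially the same route as the paper's proof: both rest on the observation that $\bar{f}_{0}$ is an equalizer (its regret is the constant $\log C$) and then exploit the normalization $\int_{\mathcal{X}^{n}}\bar{f}\left(u\right)du=1$ to show that no competitor can achieve a smaller worst-case regret. The paper phrases the second step pointwise (any $\breve{f}\ne\bar{f}_{0}$ must fall strictly below $\bar{f}_{0}$ somewhere, forcing a larger regret there), whereas you integrate the assumed uniform bound to reach the contradiction, but this is the same normalization argument in a different guise.
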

\begin{proof}
This proof by contradiction is based on the direct proof given by
\citet[§6.2.1]{RefWorks:375}. Assume, contrary to the claim, that
the density function $\bar{f}_{0}$ that satisfies equation \eqref{eq:NML}
is not the optimal predictive density function. Since, for any $v\in\mathcal{X}^{n}$,
the ratio $\bar{f}_{0}\left(v\right)/f_{\hat{\phi}\left(v\right)}\left(v\right)$
does not depend on $v$, it follows that, for any $\breve{f}\in\mathcal{E}\left(\mathcal{X}^{n}\right)\backslash\left\{ \bar{f}_{0}\right\} $,
there is a $v\in\mathcal{X}^{n}$ such that $\breve{f}\left(v\right)/f_{\hat{\phi}\left(v\right)}\left(v\right)<\bar{f}_{0}\left(v\right)/f_{\hat{\phi}\left(v\right)}\left(v\right)$.
Therefore, given any $\breve{f}\in\mathcal{E}\left(\mathcal{X}^{n}\right)\backslash\left\{ \bar{f}_{0}\right\} $,
there is a $v\in\mathcal{X}^{n}$ such that $\reg\left(\bar{f}_{0},v;\Phi\right)<\reg\left(\breve{f},v;\Phi\right)$,
which contradicts the assumption. 
\end{proof}
Note that $u$, the dummy variable of integration over $\mathcal{X}^{n}$,
appears twice in the integrand. For the observed $x\in\mathcal{X}^{n}$,
the quantity $\bar{f}_{0}\left(x\right)=\bar{f}_{0}\left(x;\Phi\right)$
is called the \emph{normalized maximum likelihood} (NML) with respect
to $\Phi$. 

According to Theorem \ref{thm:NML}, the minimax optimality \eqref{eq:optimality}
of $\bar{f}_{0}$ guarantees that $\reg\left(\bar{f}_{0},x;\Phi\right)$,
the regret due to the observed sample, cannot exceed $\sup_{u\in\mathcal{X}^{n}}\reg\left(\bar{f}_{0},u;\Phi\right)$,
the regret due to the worst-case sample. In that sense, $\bar{f}_{0}$
is optimal for the observed sample. By contrast, standard frequentist
optimality, concerned only with loss averaged over all possible samples,
guarantees no bound on the loss inflicted by any individual sample. 

Such observed-sample optimality justifies selecting the model or hypothesis
corresponding to the family $\mathcal{F}_{i}$ of distributions that
minimizes $-\log\bar{f}_{0}\left(x;\Phi_{i}\right)$, the observed
prediction error of the $i$th among a finite number of distribution
families under consideration. Following the terminology of \citet{RefWorks:292} and \citet{mediumScale},
$-\log\bar{f}_{0}\left(x;\left\{ \phi_{0}\right\} \right)-\left(-\log\bar{f}_{0}\left(x;\Phi\right)\right)$
would be the \emph{information in $x$ for discrimination} in favor
of the alternative hypothesis that $\phi\ne\phi_{0}$ over the null
hypothesis that $\phi=\phi_{0}$. Such information is an interpretable
measure of evidence under general conditions and can quantify the
strength of any evidence in favor of the null hypothesis as well as
that of any evidence against it. More importantly, the information
for discrimination  optimally quantifies the difference in how well
each model or hypothesis predicts relative to ideal predictors of
individual samples rather than relative to unknown true distributions,
the ideal predictors in the sense of averages over samples. The Kullback-Leibler
risk, for example, only measures mean discrimination information relative
to unknown ideal predictors in the average sense. 

Since the base of the logarithm is inconsequential, it may be chosen
for convenience of interpretation. The binary logarithm $\left(\log_{2}\right)$,
yielding the number of \emph{bits} of information, enables not only
immediate exponentiation back to the ratio domain but also the use
of grades of evidence that are both broad enough and refined enough
for applications across scientific disciplines (Table \ref{tab:Grades-of-hypothesis-evidence}).
Except for the distinction between negligible and weak evidence, the
grades closely mirror those \citet{RefWorks:182} originally proposed
for the Bayes factor; cf. \citet{RefWorks:435}. Accordingly, the
$\left[3,5\right)$ grade of Table \ref{tab:Grades-of-hypothesis-evidence}
is what \citet[§1.12]{RefWorks:122} considers {}``fairly strong
evidence'' for one simple hypothesis over another, and the $\left[5,7\right)$
and $\left[7,\infty\right)$ grades together constitute his {}``quite
strong evidence.'' %
\begin{table}
\begin{tabular}{|c|c|c|c|c|c|c|}
\hline 
\textbf{Information (bits)} & $\left(0,1\right)$ & $\left[1,2\right)$ & $\left[2,3\right)$ & $\left[3,5\right)$ & $\left[5,7\right)$ & $\left[7,\infty\right)$\tabularnewline
\hline 
\textbf{Evidence grade} & Negligible & Weak & Moderate & Strong & Very strong & Overwhelming\tabularnewline
\hline
\end{tabular}

\caption{\label{tab:Grades-of-hypothesis-evidence}Heuristic grades of evidence
for an alternative hypothesis over a null hypothesis corresponding
to intervals of the information for discrimination. The absolute value
of a negative amount of information gives the grade of evidence favoring
the null hypothesis.}

\end{table}

\subsubsection{Extension of information-theoretic inference}

Despite the unique observed-sample optimality of the NML for quantifying
discrimination information, three shortcomings make it impractical
for use in many biostatistics applications. First, since such applications
typically partition $\phi$ into an interest parameter $\theta$ and
a nuisance parameter $\lambda$, the regret is relative to an ideal
distribution determined by maximizing the likelihood not only over
$\theta$ but also over $\lambda$. As a result, the ideal member
of the family of distributions would be considered a better predictor
than another member that has the same value of $\theta$ on the basis
of having a different value of $\lambda$, which should be irrelevant.
Thus, the NML is inadequate for testing hypotheses about $\theta$
in the presence of $\lambda$.

Second, the NML only uses information that is in $x$, but considering
such information about the parameter in isolation from other available
information can be misleading unless the sample size is sufficiently
large. Additional information may be available in data from other
populations, from other biological features such as genes or SNPs,
or from other feature-feature comparisons. Even in the absence of
such incidental information, there would be some information in the
fact that the null hypothesis that $\phi=\phi_{0}$ is seriously considered. 

Third, the normalizing denominator of equation \eqref{eq:NML}, the
logarithm of which is called the \emph{parametric complexity} of $\mathcal{F}$,
is infinite for typical families of distributions, including the normal
family. Each of the variant NMLs proposed to address the problem introduces
its own conceptual difficulties \citep{Lanterman2005b,RefWorks:375}.
For example, \citet[§5.2.4]{RefWorks:374}, \citet{RissanenRoos2007b},
and \citet[§11.4.2]{RefWorks:375} proposed conditional versions of
the NML. Cf. related work by \citet{TakimotoWarmuth2000b}.

To overcome the first of the three identified problems with NML, it
is generalized in Section \ref{sec:NMWL} by replacing the original
data with a statistic that is a function of the data and that has
a distribution depending on $\theta$ but not on $\lambda$. Since
the information in the data relevant to the interest parameter is
largely confined to the statistic, that information can be better
quantified in terms of the distribution of the statistic than in terms
of the distribution of the original data, the latter depending on
the value of the nuisance parameter. In terms of the minimum description
length (MDL) metaphor \citep{RefWorks:374,RefWorks:375}, the data
are first compressed with little information loss by reduction to
a smaller-dimensional statistic and then further compressed by the
family of distributions. 

The use of a weighted likelihood addresses the second problem in the
same section, which also includes some results relevant to the probability
of observing misleading information. (The weighted likelihood was
originally proposed for bias-variance trade-offs given relatively
small $n_{i}$ but potentially large $N$ \citep{Feifang2002347}.
More formally, \citet{RefWorks:509} derived the weighted likelihood
from the minimization of Kullback-Leibler loss.)

As a by-product for commonly used distribution families, that solution
to the second problem automatically solves the third problem, as illustrated
in Section \ref{sec:Case-study} with a multiple-population data set
and a multiple-feature data set. Finally, Section \ref{sec:Discussion}
concludes by highlighting desirable properties of the new NML-based
measure of information for discrimination.

\section{\label{sec:NMWL}Optimal inference}

\subsection{\label{sub:Preliminaries}Preliminaries}

\subsubsection{Weighted likelihood}

The framework of Section \ref{sub:Optimality-observed} is generalized
by the use of data reduction to eliminate a nuisance parameter in
$\phi$. Consider a measurable map $\tau_{n}:\mathcal{X}^{n}\rightarrow\mathcal{T}\left(n\right)$.
Let $\theta:\Phi\rightarrow\Theta$ denote a subparameter function
such that the probability density of $\tau_{n}\left(X\right)$ is
$g_{\theta\left(\phi\right)}\left(\tau_{n}\left(X\right)\right)$,
abbreviated as $g_{\theta}\left(\tau\left(X\right)\right)$; the dependence
of the density function $g_{\theta}$ on $n$ is suppressed. Thus,
the reduction of the data $X$ to a statistic $\tau\left(X\right)$
has the effect of replacing the full parameter $\phi$ with the interest
parameter $\theta$. Important special cases of $L\left(\theta;\tau\left(x\right)\right)=g_{\theta}\left(\tau\left(x\right)\right)$
as a function of $\theta$ are conditional likelihood functions and
marginal likelihood functions (\citealp{RefWorks:122,Severini2000}; \citealp{mediumScale}). 

The framework is now extended to $N$ hypotheses or comparisons. Let
$\mathcal{G}_{n,i}=\left\{ g_{n,i,\theta}:\theta\in\Theta\right\} \subset\mathcal{E}\left(\mathcal{T}\left(n\right)\right)$
denote the parametric family of density functions on $\mathcal{T}\left(n\right)$
for parameter space $\Theta$. The {}``$n$'' and {}``$i$'' subscripts
will be dropped when their values are clear. For the $i$th of $N$
null hypotheses or comparisons, suppose $x_{i}\in\mathcal{X}^{n_{i}}$
is a realization of the random vector $X_{i}$ of $n_{i}$ independent
components. Then each $T_{i}=\tau\left(X_{i}\right)$ is distributed
with density $g_{\theta_{i}}=g_{n_{i},i,\theta_{i}}$, and each outcome
$t_{i}=\tau\left(x_{i}\right)$ is an element of $\mathcal{T}_{i}=\mathcal{T}\left(n_{i}\right)$.
Let $L_{i}\left(\theta;\tau\left(x_{i}\right)\right)=g_{n,i,\theta}\left(\tau\left(x_{i}\right)\right)=g_{\theta}\left(\tau\left(x_{i}\right)\right)$,
giving each comparison its own likelihood function. 

Mapping $\mathcal{X}^{n_{i}}$ to $\mathcal{T}_{i}=\mathbb{R}^{D}$
is common in data reduction applications in which $\Theta=\mathbb{R}^{D}$.
Assigning a common parametric family to all comparisons ($\mathcal{G}_{n,i}=\mathcal{G}_{n,1}$
for all $i$) is usually appropriate when each comparison corresponds
to a biological feature, as in Section \ref{sub:Biological-features}.

The observation $\mathbf{x}=\left\langle x_{1},\dots,x_{N}\right\rangle $
generates the test statistic vector $\mathbf{t}=\left\langle t_{1},\dots,t_{N}\right\rangle =\left\langle \tau\left(x_{1}\right),\dots,\tau\left(x_{N}\right)\right\rangle $,
an outcome of $\mathbf{T}=\left\langle T_{1},\dots,T_{N}\right\rangle =\left\langle \tau\left(X_{1}\right),\dots,\tau\left(X_{N}\right)\right\rangle $.
For inference about $\theta_{i}$ on the basis of $\mathbf{t}$, the
\emph{weighted likelihood function} $\bar{L}_{i}\left(\bullet;\mathbf{t}\right):\Theta\rightarrow\left[0,\infty\right)$
is defined by \begin{equation}
\log\bar{L}_{i}\left(\theta_{i};\mathbf{t}\right)=\sum_{j=1}^{N}w_{ij}\log L_{j}\left(\theta_{i};t_{j}\right),\label{eq:weighted-likelihood}\end{equation}
where the weights $w_{i}=\left\langle w_{i1},\dots,w_{iN}\right\rangle $
are real numbers that may depend on $\left\langle n_{1},\dots,n_{N}\right\rangle $
and that satisfy $w_{ii}\ge w_{ij}$ \citep{Feifang2002347}. The
weights normally also conform to $\sum_{j=1}^{N}w_{ij}=1$, a requirement
that will be temporarily relaxed in Section \ref{sub:NMWL}.
\begin{example}
\label{exa:microarray}In most microarray studies, the expression
levels of $N$ genes are measured with the goal of determining which
genes are differentially expressed between a treatment/perturbation
group of $m$ replicates and a control group of $n$ replicates; each
of these biological replicates represents one or more organisms. (Single-channel
arrays do not require the pairing of replicates between groups as
did the dual-channel arrays.) Following the typical assumption that
intensity values are lognormally distributed, let $x_{i}=\left\langle x_{i1},\dots,x_{im}\right\rangle $
and $y_{i}=\left\langle y_{i1},\dots,y_{in}\right\rangle $ denote
the logarithms of the $m$ and $n$ intensities of the $i$th gene
in the perturbation and control group, respectively. For small numbers
of replicates, the assumption of a common variance within each group
is useful: $X_{ij}\sim\N\left(\xi_{i},\sigma_{i}^{2}\right)$ and
$Y_{ij}\sim\N\left(\eta_{i},\sigma_{i}^{2}\right)$ with realized
values $X_{ij}=x_{ij}$ for $j=1,\dots,m$ and $Y_{ij}=y_{ij}$ for
$j=1,\dots,n$. If $\theta_{i}$ is the absolute value of the \emph{inverse
coefficient of variation} $\left(\xi_{i}-\eta_{i}\right)/\sigma_{i}$,
then $t_{i}$ is conveniently taken as the absolute value of the two-sample,
equal-variance $t$-statistic, which has a noncentral $t$ distribution
with noncentrality parameter $\left(m^{-1}+n^{-1}\right)^{-1/2}\theta_{i}$
and $m+n-2$ degrees of freedom.
\end{example}
The sampling distribution of $\mathbf{T}$ is denoted by $P$ to specify
properties of the weights while accommodating model misspecification,
the case that there is not a $\theta_{i}\in\Theta$ such that $g_{\theta_{i}}$
is a density admitted by the marginal distribution $P\left(T_{i}\in\bullet\right)$
for all $i\in\left\{ 1,\dots,N\right\} $. With suitable weights and
the assumption that $\hat{\theta}_{i}\left(\mathbf{T}\right)=\arg\sup_{\theta\in\Theta}\bar{L}_{i}\left(\theta;\mathbf{T}\right)$
is almost surely unique for all $i\in\left\{ 1,\dots,N\right\} $,
the difference between $\hat{\theta}_{i}\left(\mathbf{T}\right)$
and the conventional maximum likelihood estimator of $\theta_{i}$
almost surely converges to 0 as $n_{i}$ diverges with $N$ held fixed.
Specifically, $w_{ii}=1+o_{P}\left(1\right)$ and $i\ne j\implies w_{ij}=o_{P}\left(1\right)$
ensure that $\hat{\theta}_{i}\left(\mathbf{T}\right)=\arg\sup_{\theta\in\Theta}L\left(\theta;x_{i}\right)+o_{P}\left(1\right)$,
where the term $o_{P}\left(1\right)$ converges to 0 with $P$-probability
1 as $n_{i}\rightarrow\infty$ with any ratio $n_{j}/n_{k}$ bounded
by constants: $n_{j}=O\left(n_{k}\right)$ for all $j,k\in\left\{ 1,\dots,N\right\} $.

\subsubsection{Predictive loss}

For some $\bar{g}\in\mathcal{E}\left(\mathcal{T}_{i}\right)$, the
\emph{generalized regret\begin{equation}
\reg_{i}\left(\bar{g},\mathbf{t};\Theta\right)=-\log\bar{g}\left(\tau\left(x_{i}\right)\right)-\inf_{\theta\in\Theta}\left(-\log\bar{L}_{i}\left(\theta;\mathbf{t}\right)\right)=\log\frac{\bar{L}_{i}\left(\hat{\theta}_{i}\left(\mathbf{t}\right);\mathbf{t}\right)}{\bar{g}\left(\tau\left(x_{i}\right)\right)}\label{eq:generalized-regret}\end{equation}
}measures loss incurred by the likelihood associated with $\bar{g}$,
the predictive distribution, relative to $\bar{L}_{i}\left(\hat{\theta}_{i}\left(\mathbf{t}\right);\mathbf{t}\right)$,
the maximum weighted likelihood of $\theta_{i}$. In other words,
$\reg_{i}\left(\bar{g},\mathbf{t};\Theta\right)$ is the discrepancy
between error in predicting the value of $\tau\left(x\right)$ on
the basis of $\bar{g}$ and the prediction error minimized over the
interest parameter. The latter error is more relevant to hypotheses
about the value of $\theta$ than a prediction error minimized over
the full parameter $\phi$, including the nuisance parameter $\lambda$
(§\ref{sub:Optimality-observed}). Thus, $\reg_{i}\left(\bar{g},\mathbf{t};\Theta\right)$
replaces $\reg\left(\bar{f},x;\Phi\right)$ as the regret in the presence
of the nuisance parameter or a nonzero weight other than $w_{ii}$.

\subsection{\label{sub:Optimal-predictive-distribution}Optimal predictive distribution}

\subsubsection{\label{sub:NMWL}Exact predictive distribution}

For each $t\in\mathcal{T}_{i}$, let $\mathbf{t}_{i}\left(t\right)$
denote the $N$-tuple of statistics that is equal to $\mathbf{t}$
in all components except the $i$th, which has $t$ in place of $t_{i}$.
For example, $\mathbf{t}_{1}\left(t\right)=\left\langle t,t_{2},\dots,t_{N}\right\rangle $,
but $\mathbf{t}_{i}\left(t\right)=\left\langle t_{1},\dots,t_{i-1},t,t_{i+1},\dots,t_{N}\right\rangle $
if $3\le i\le N-2$. 

The optimal predictive density function of Section \ref{sub:Optimality-observed}
is a special case of \[
\bar{g}_{i}=\arg\inf_{\bar{g}\in\mathcal{E}\left(\mathcal{T}_{i}\right)}\sup_{t\in\mathcal{T}_{i}}\reg_{i}\left(\bar{g},\mathbf{t}_{i}\left(t\right);\Theta\right),\]
the $\mathcal{E}\left(\mathcal{T}_{i}\right)$-\emph{optimal predictive
density function relative to }$\left\langle \mathcal{G},w_{i}\right\rangle $.
\begin{thm}
\label{thm:Shtarkov}Given some $i\in\left\{ 1,\dots,N\right\} $
and $\mathbf{t}\in\mathcal{T}_{1}\times\cdots\times\mathcal{T}_{N}$,
if $\int\bar{L}_{i}\left(\hat{\theta}_{i}\left(\mathbf{t}_{i}\left(t\right)\right);\mathbf{t}_{i}\left(t\right)\right)dt<\infty$,
then, for all $t_{i}\in\mathcal{T}_{i}$, the \textup{$\mathcal{E}\left(\mathcal{T}_{i}\right)$}-optimal
predictive density function relative to \textup{$\left\langle \mathcal{G},w_{i}\right\rangle $}
satisfies\begin{equation}
\bar{g}_{i}\left(t_{i}\right)=\frac{\bar{L}_{i}\left(\hat{\theta}_{i}\left(\mathbf{t}\right);\mathbf{t}\right)}{\int_{\mathcal{T}_{i}}\bar{L}_{i}\left(\hat{\theta}_{i}\left(\mathbf{t}_{i}\left(t\right)\right);\mathbf{t}_{i}\left(t\right)\right)dt}.\label{eq:NMWL}\end{equation}
\end{thm}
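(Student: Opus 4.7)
The plan is to mirror the proof of Theorem~\ref{thm:NML} essentially line-for-line, with the ordinary profile likelihood $u \mapsto f_{\hat\phi(u)}(u)$ replaced by the maximized weighted-likelihood profile $t \mapsto \bar L_i(\hat\theta_i(\mathbf{t}_i(t));\mathbf{t}_i(t))$ and the sample space $\mathcal{X}^n$ replaced by $\mathcal{T}_i$. The structural fact that makes this transposition work is that the proposed $\bar g_i$ in~(\ref{eq:NMWL}), read as the density $\bar g_i(t)=\bar L_i(\hat\theta_i(\mathbf{t}_i(t));\mathbf{t}_i(t))/Z$ with $Z$ the denominator, is by construction proportional to the maximized weighted likelihood at $\mathbf{t}_i(t)$. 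Consequently $\reg_i(\bar g_i,\mathbf{t}_i(t);\Theta)=\log Z$ for every $t\in\mathcal{T}_i$, a finite quantity by the integrability hypothesis, so the supremum regret of $\bar g_i$ is attained at every point.

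Given this constant-regret property, I would argue by contradiction exactly as in the displayed proof of Theorem~\ref{thm:NML}. Suppose some $\breve g\in\mathcal{E}(\mathcal{T}_i)\setminus\{\bar g_i\}$ achieves strictly smaller worst-case generalized regret. Because $\bar g_i$ and $\breve g$ are both probability densities on $\mathcal{T}_i$, they integrate to the same value, so there exists $v\in\mathcal{T}_i$ with $\breve g(v)<\bar g_i(v)$. Dividing both sides by the positive quantity $\bar L_i(\hat\theta_i(\mathbf{t}_i(v));\mathbf{t}_i(v))$ and applying $-\log$ to the definition~(\ref{eq:generalized-regret}) yields $\reg_i(\breve g,\mathbf{t}_i(v);\Theta)>\reg_i(\bar g_i,\mathbf{t}_i(v);\Theta)$. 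Since the right-hand side equals $\sup_t\reg_i(\bar g_i,\mathbf{t}_i(t);\Theta)$, this forces $\sup_t\reg_i(\breve g,\mathbf{t}_i(t);\Theta)>\sup_t\reg_i(\bar g_i,\mathbf{t}_i(t);\Theta)$, contradicting the assumed optimality of $\breve g$.

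Three bookkeeping differences from the original proof need to be tracked: the generalized regret~(\ref{eq:generalized-regret}) is computed from the weighted likelihood $\bar L_i$ rather than from an unweighted $f_\phi$; the competing densities live on $\mathcal{T}_i$ rather than on $\mathcal{X}^n$, so the ``variable sample point'' is the $i$th coordinate only, with the other $N-1$ coordinates frozen through the notation $\mathbf{t}_i(\cdot)$; and the Shtarkov integrability condition is imposed on the maximized weighted likelihood rather than on the pure maximum likelihood. I do not anticipate any genuine obstacle: because the maximization enters only through the profile $t\mapsto\bar L_i(\hat\theta_i(\mathbf{t}_i(t));\mathbf{t}_i(t))$ and the generalized regret is defined in exact analogy with the ordinary regret, the argument is essentially a change of notation. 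The one delicate point that will need a sentence of justification is interpretive: the symbol $\mathbf{t}$ appearing in the numerator of~(\ref{eq:NMWL}) must be read, as a function of the evaluation point $t_i$, as $\mathbf{t}_i(t_i)$, so that $\bar g_i$ is a bona fide density on $\mathcal{T}_i$ that normalizes to one against the denominator.
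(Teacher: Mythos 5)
Your proposal is correct and follows essentially the same route as the paper's own proof: both rest on the observation that the ratio $\bar{g}_{i}\left(t_{i}\right)/\bar{L}_{i}\left(\hat{\theta}_{i}\left(\mathbf{t}_{i}\left(t_{i}\right)\right);\mathbf{t}_{i}\left(t_{i}\right)\right)$ is constant in $t_{i}$ (your constant-regret property), so any competing density, integrating to the same total, must dip below $\bar{g}_{i}$ somewhere and hence incur strictly larger regret there than the worst-case regret of $\bar{g}_{i}$. You merely make explicit two steps the paper leaves implicit (the integrate-to-one argument and the reading of $\mathbf{t}$ as $\mathbf{t}_{i}\left(t_{i}\right)$), which is fine.
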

\begin{proof}
The present argument follows that used to prove Theorem \ref{thm:NML}.
Assume, contrary to the claim, that the density function $\bar{g}_{i}$
that satisfies equation \eqref{eq:NMWL} for all $t_{i}\in\mathcal{T}_{i}$
is not the optimal predictive density function relative to $\left\langle \mathcal{G},w_{i}\right\rangle $.
The substitution $\bar{L}_{i}\left(\hat{\theta}_{i}\left(\mathbf{t}\right);\mathbf{t}\right)=\bar{L}_{i}\left(\hat{\theta}_{i}\left(\mathbf{t}_{i}\left(t_{i}\right)\right);\mathbf{t}_{i}\left(t_{i}\right)\right)$
demonstrates that the ratio $\bar{g}_{i}\left(t_{i}\right)/\bar{L}_{i}\left(\hat{\theta}_{i}\left(\mathbf{t}_{i}\left(t_{i}\right)\right);\mathbf{t}_{i}\left(t_{i}\right)\right)$
does not depend on $t_{i}$. It follows that, for any $\breve{g}_{i}\in\mathcal{E}\left(\mathcal{T}_{i}\right)\backslash\left\{ \bar{g}_{i}\right\} $,
there is a $t_{i}\in\mathcal{T}_{i}$ such that $\breve{g}_{i}\left(t_{i}\right)/\bar{L}_{i}\left(\hat{\theta}_{i}\left(\mathbf{t}_{i}\left(t_{i}\right)\right);\mathbf{t}_{i}\left(t_{i}\right)\right)<\bar{g}_{i}\left(t_{i}\right)/\bar{L}_{i}\left(\hat{\theta}_{i}\left(\mathbf{t}_{i}\left(t_{i}\right)\right);\mathbf{t}_{i}\left(t_{i}\right)\right)$.
Therefore, given any $\breve{g}_{i}\in\mathcal{E}\left(\mathcal{T}_{i}\right)\backslash\left\{ \bar{g}_{i}\right\} $,
there is a $t_{i}\in\mathcal{T}_{i}$ such that $\reg_{i}\left(\bar{g}_{i},\mathbf{t}_{i}\left(t_{i}\right);\Theta\right)<\reg_{i}\left(\breve{g}_{i},\mathbf{t}_{i}\left(t_{i}\right);\Theta\right)$,
which contradicts the assumption. 
\end{proof}
For any $x_{i}\in\mathcal{X}^{n}$, the quantity $\bar{g}_{i}\left(\tau\left(x_{i}\right)\right)=\bar{g}_{i}\left(\tau\left(x_{i}\right);\Theta\right)$
is the \emph{normalized maximum weighted likelihood (NMWL)} with respect
to $\Theta$ or, more precisely, with respect to $\left\langle \mathcal{G},w_{i}\right\rangle $.
\begin{example}
\label{exa:conditional-NML}When the constraint that $\sum_{j=1}^{N}w_{ij}=1$
is relaxed, NMWL generalizes various previous NMLs as follows. If
$\tau\left(x_{1}\right)=x_{1}$ for some $x_{1}\in\mathcal{X}^{n_{1}}$
and if $N=1$, the NMWL reduces to the probability density $\bar{g}_{1}\left(x_{1}\right)$
with $w_{1,1}=1$ and thus to $\bar{f}_{0}\left(x\right)$, the NML
of equation \eqref{eq:NML}. For an observed vector $\left\langle y_{1},\dots,y_{n_{1}}\right\rangle \in\mathcal{X}^{n_{1}}$,
assigning $N=2$, $\theta_{1}=\theta_{2}$, $t_{1}=\left\langle y_{1},\dots,y_{n_{1}-1}\right\rangle $,
and $t_{2}=y_{n_{1}}$ demonstrates that the prominent conditional
NMLs are NMWLs in the case of IID data. In particular, \citet[§11.4.2]{RefWorks:375}
considered $\bar{g}_{1}\left(\left\langle t_{1},t_{2}\right\rangle \right)$
with $w_{1,1}=w_{1,2}=1$. Conversely, \citet[§5.2.4]{RefWorks:374}
and \citet{RissanenRoos2007b} studied $\bar{g}_{2}\left(\left\langle t_{1},t_{2}\right\rangle \right)$
with $w_{2,1}=w_{2,2}=1$, thereby facilitating computation of the
normalizing constant in equation \eqref{eq:NMWL} since the integration
is only over a scalar. The main drawback of applying conditional NMLs
to the IID setting is the arbitrary nature of choosing an observation
$x_{2}$ to leave out since the observations are not ordered in time
\citep[§11.4.3]{RefWorks:375}. The same issue arises in Bayesian
model selection when an improper prior is conditioned on a minimal
training sample before computing the Bayes factor. A popular solution
is to take geometric or arithmetic averages over all possible minimal
training samples \citep{Berger2004841}. Analogous approaches to IID
applications of conditional NMLs would likewise depend on arbitrary
choices of averages and of training sample sizes (§\ref{sub:Quantifying-statistical-evidence}).
\end{example}

\subsubsection{Approximate predictive distribution}

A computationally efficient approximation to the NMWL is available
if:
\begin{enumerate}
\item The weight of any comparison in focus is equal to that of any other
comparison when it is in focus, i.e., $w_{ii}=w_{1,1}$ for all $i$.
\item The weight of each comparison not in focus is equal to that of any
other comparison not in focus, i.e., $w_{ij}=w_{1,2}$ for all $i\ne j$.
\item The sample sizes and sample spaces are equal, i.e., $n_{i}=n_{1}$
and $\mathcal{T}_{i}=\mathcal{T}_{1}$ for all $i$.
\item All comparisons share a single family, i.e., $\mathcal{G}_{n_{1},i}=\mathcal{G}_{n_{1},1}$
and $L_{1}=L_{i}$ for all $i$.
\end{enumerate}
Under those \emph{equal weight conditions}, there is an approximate
weight $\tilde{w}_{N+1}$ such that $\tilde{w}_{N+1}=w_{ii}$ for
all $i$ and an approximate weight $\tilde{w}_{j}$ such that $\tilde{w}_{j}=\left(N-1\right)N^{-1}w_{ij}$
for all $i$ and $j$ except $i=j$. Then $\sum_{j=1}^{N+1}\tilde{w}_{j}=N\tilde{w}_{1}+\tilde{w}_{N+1}=\left(N-1\right)w_{1,2}+w_{1,1}=1$.

For any $t\in\mathcal{T}_{1}$, let $\tilde{\mathbf{t}}\left(t\right)$
denote $\left\langle t_{1},\dots,t_{N},t\right\rangle \in\mathcal{T}_{1}^{N+1}$.
For inference about $\theta_{i}$ on the basis of $\mathbf{t}$, the
\emph{approximate weighted likelihood function} $\tilde{L}\left(\bullet;\tilde{\mathbf{t}}\left(t\right)\right):\Theta\rightarrow\left[0,\infty\right)$
is defined by \begin{eqnarray*}
\log\tilde{L}\left(\theta_{i};\tilde{\mathbf{t}}\left(t\right)\right) & = & \sum_{j=1}^{N}\tilde{w}_{j}\log L_{j}\left(\theta_{i};t_{j}\right)+\tilde{w}_{N+1}\log L_{j}\left(\theta_{i};t\right)\\
 & = & \frac{1-w_{1,1}}{N}\sum_{j=1}^{N}\log L_{1}\left(\theta_{i};t_{j}\right)+w_{1,1}\log L_{1}\left(\theta_{i};t\right),\end{eqnarray*}
the second equality implied by the equal weight conditions. Let $\hat{\theta}\left(\tilde{\mathbf{t}}\left(t\right)\right)=\arg\sup_{\theta\in\Theta}\tilde{L}\left(\theta;\tilde{\mathbf{t}}\left(t\right)\right)$. 

The following theorem indicates that the exact NMWL \eqref{eq:NMWL}
is approximated by\[
\tilde{g}_{i}\left(t_{i}\right)=\frac{\bar{L}_{i}\left(\hat{\theta}_{i}\left(\mathbf{t}\right);\mathbf{t}\right)}{\int_{\mathcal{T}_{1}}\tilde{L}\left(\hat{\theta}\left(\tilde{\mathbf{t}}\left(t\right)\right);\tilde{\mathbf{t}}\left(t\right)\right)dt},\]
which may be quickly calculated even for large $N$ since the denominator,
not depending on $i$, need only be computed once. In the theorem
and its supporting lemmas, $\tilde{\mathbf{T}}\left(t\right)=\left\langle T_{1},\dots,T_{N},t\right\rangle $,
and $\overset{\text{a.s.}}{\rightarrow}$ denotes almost sure convergence
as $N$ increases with $n_{1}$ fixed.
\begin{lem}
\label{lem:approximation}If the equal weight conditions hold and
if $T_{1},\dots,T_{N}$ are drawn independently from a mixture distribution,
then, for all $\theta\in\Theta$ and $t\in\mathcal{T}_{i}$,\begin{equation}
\log\tilde{L}\left(\theta;\tilde{\mathbf{T}}\left(t\right)\right)-\log\bar{L}_{i}\left(\theta;\mathbf{T}_{i}\left(t\right)\right)\overset{\text{a.s.}}{\rightarrow}0.\label{eq:approximation}\end{equation}
\end{lem}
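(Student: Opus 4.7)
The plan is to reduce the claim to a straightforward application of the strong law of large numbers after rewriting both log-likelihood expressions in a common form. First I would invoke the equal weight conditions together with the normalization constraint $\sum_{j=1}^{N}w_{ij}=1$ to deduce that $w_{1,2}=(1-w_{1,1})/(N-1)$, so that the weighted log-likelihood becomes
\begin{equation*}
\log\bar{L}_{i}\!\left(\theta;\mathbf{T}_{i}(t)\right)=\frac{1-w_{1,1}}{N-1}\sum_{j\neq i}\log L_{1}(\theta;T_{j})+w_{1,1}\log L_{1}(\theta;t),
\end{equation*}
while the approximate log-likelihood is
\begin{equation*}
\log\tilde{L}\!\left(\theta;\tilde{\mathbf{T}}(t)\right)=\frac{1-w_{1,1}}{N}\sum_{j=1}^{N}\log L_{1}(\theta;T_{j})+w_{1,1}\log L_{1}(\theta;t).
\end{equation*}
The $w_{1,1}\log L_{1}(\theta;t)$ terms cancel exactly, so only the sample-average pieces remain to be controlled.

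Next I would simplify the residual difference algebraically. Writing $\bar{S}_{N}=N^{-1}\sum_{j=1}^{N}\log L_{1}(\theta;T_{j})$, a short calculation gives
\begin{equation*}
\log\tilde{L}\!\left(\theta;\tilde{\mathbf{T}}(t)\right)-\log\bar{L}_{i}\!\left(\theta;\mathbf{T}_{i}(t)\right)=\frac{1-w_{1,1}}{N-1}\bigl[\log L_{1}(\theta;T_{i})-\bar{S}_{N}\bigr].
\end{equation*}
So the problem reduces to showing that both $\log L_{1}(\theta;T_{i})/(N-1)$ and $\bar{S}_{N}/(N-1)$ converge to $0$ almost surely as $N\to\infty$ with $n_{1}$ held fixed.

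For the second piece, since the $T_{j}$ are independent draws from a common mixture distribution, the strong law of large numbers yields $\bar{S}_{N}\overset{\text{a.s.}}{\rightarrow}\mathrm{E}[\log L_{1}(\theta;T_{1})]$, a finite constant, so dividing by $N-1$ sends it to $0$ almost surely. For the first piece, $\log L_{1}(\theta;T_{i})$ is a fixed random variable that is almost surely finite, so $\log L_{1}(\theta;T_{i})/(N-1)\overset{\text{a.s.}}{\rightarrow}0$ trivially. Multiplying by the bounded factor $(1-w_{1,1})$ preserves the convergence, which establishes \eqref{eq:approximation}.

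The main obstacle I anticipate is the integrability requirement that $\mathrm{E}[|\log L_{1}(\theta;T_{1})|]<\infty$, needed to legitimize the strong law. This must be inherited from the mixture distribution hypothesis, presumably as an unstated regularity condition on the components of the mixture. If that integrability fails for some boundary values of $\theta$, one would have to restrict attention to the subset of $\Theta$ on which the mean exists, which is a mild restriction in the intended applications but should be flagged. Everything else is routine bookkeeping.
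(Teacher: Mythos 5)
Your proof is correct and follows essentially the same route as the paper's: cancel the common focal term $w_{1,1}\log L_{1}(\theta;t)$ and show that the difference between the full sample average and the leave-one-out average vanishes almost surely by the law of large numbers. Your extra algebraic reduction to $\frac{1-w_{1,1}}{N-1}\bigl[\log L_{1}(\theta;T_{i})-\bar{S}_{N}\bigr]$ and your explicit flagging of the integrability condition $\mathrm{E}\bigl[\lvert\log L_{1}(\theta;T_{1})\rvert\bigr]<\infty$ (which the paper leaves implicit) are refinements, not a different argument.
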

\begin{proof}
According to the equal weight conditions,\begin{eqnarray*}
\log\tilde{L}\left(\theta;\tilde{\mathbf{T}}\left(t\right)\right)-\log\bar{L}_{i}\left(\theta;\mathbf{T}_{i}\left(t\right)\right) & =\end{eqnarray*}
\[
\left(\frac{1-\tilde{w}_{N+1}}{N}\sum_{j=1}^{N}\log L_{1}\left(\theta;T_{j}\right)+\tilde{w}_{N+1}\log L_{1}\left(\theta;T_{i}\right)\right)-\left(\frac{1-w_{ii}}{N-1}\sum_{j\ne i;i=1}^{N}\log L_{1}\left(\theta;T_{j}\right)+w_{ii}\log L_{1}\left(\theta;T_{i}\right)\right)\]
\[
=\left(1-\tilde{w}_{N+1}\right)\left(\frac{1}{N}\sum_{j=1}^{N}\log L_{1}\left(\theta;T_{j}\right)-\frac{1}{N-1}\sum_{j\ne i;i=1}^{N}\log L_{1}\left(\theta;T_{j}\right)\right).\]
The second factor almost surely vanishes by the law of large numbers.\end{proof}
\begin{lem}
\label{lem:MLEs}Under the assumptions of Lemma \ref{lem:approximation},
the stipulations that $\hat{\theta}\left(\tilde{\mathbf{T}}\left(t\right)\right)$
and $\hat{\theta}_{i}\left(\mathbf{T}_{i}\left(t\right)\right)$ are
almost always unique for all $t\in\mathcal{T}_{i}$ and that $L_{1}\left(\bullet;T_{i}\right)$
is almost surely continuous on $\Theta$ for all $i=1,\dots,N$ imply
that, for all $i=1,\dots,N$ and $t\in\mathcal{T}_{i}$, $\hat{\theta}\left(\tilde{\mathbf{T}}\left(t\right)\right)-\hat{\theta}_{i}\left(\mathbf{T}_{i}\left(t\right)\right)\overset{\text{a.s.}}{\rightarrow}0.$\end{lem}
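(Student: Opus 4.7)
The plan is to promote the pointwise a.s.\ closeness of the two objective functions established in Lemma \ref{lem:approximation} to their maximizers via a standard argmax-continuity argument. Let $D_{N}(\theta)=\log\tilde{L}(\theta;\tilde{\mathbf{T}}(t))-\log\bar{L}_{i}(\theta;\mathbf{T}_{i}(t))$. Lemma \ref{lem:approximation} shows $D_{N}(\theta)\overset{\text{a.s.}}{\to}0$ for each fixed $\theta\in\Theta$, and $\hat{\theta}(\tilde{\mathbf{T}}(t))$ maximizes $\log\bar{L}_{i}(\theta;\mathbf{T}_{i}(t))+D_{N}(\theta)$ while $\hat{\theta}_{i}(\mathbf{T}_{i}(t))$ maximizes $\log\bar{L}_{i}(\theta;\mathbf{T}_{i}(t))$, so it suffices to show that the maximizers of these two objectives merge as $N\to\infty$.

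First, I would upgrade the pointwise convergence of $D_{N}$ to a uniform one on every compact $K\subset\Theta$. Since, up to the constant factor $1-\tilde{w}_{N+1}$, $D_{N}(\theta)$ is the difference of two empirical means of the i.i.d.\ sequence $\log L_{1}(\theta;T_{j})$, a uniform strong law of large numbers (e.g.\ Jennrich 1969, or Newey and McFadden 1994, Lemma 2.4) applies provided $\log L_{1}(\bullet;T_{1})$ is a.s.\ continuous on $K$, which is assumed, and is dominated on $K$ by an envelope integrable with respect to the mixture distribution of $T_{1}$, a standard regularity condition in this setting. Each of the two averages then converges uniformly on $K$ to the common limit $\mathrm{E}[\log L_{1}(\theta;T_{1})]$, so $\sup_{\theta\in K}|D_{N}(\theta)|\overset{\text{a.s.}}{\to}0$.

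Second, I would apply an argmax-continuity result (van der Vaart 1998, Theorem 5.7, or Newey and McFadden 1994, Theorem 2.1): given almost-sure uniqueness of each maximizer and uniform closeness of the two objectives on compacta, the two argmaxes $\hat{\theta}(\tilde{\mathbf{T}}(t))$ and $\hat{\theta}_{i}(\mathbf{T}_{i}(t))$ must lie in the same vanishing neighborhood of the common limit, yielding $\hat{\theta}(\tilde{\mathbf{T}}(t))-\hat{\theta}_{i}(\mathbf{T}_{i}(t))\overset{\text{a.s.}}{\to}0$. The main obstacle is guaranteeing that both maximizer sequences remain inside some common compact subset of $\Theta$ so that uniform-on-compacta convergence suffices; this is typically handled either by taking $\Theta$ compact or by imposing a coercivity-plus-identifiability condition on $\mathrm{E}[\log L_{1}(\theta;T_{1})]$, which is implicit in the paper's assumption that the weighted-likelihood maximizers exist uniquely for all $N$. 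Given that tightness, the combination of the ULLN and the argmax theorem delivers the conclusion routinely.
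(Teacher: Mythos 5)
Your proposal follows the same basic route as the paper---transfer the convergence of the objective functions established in Lemma \ref{lem:approximation} to their maximizers---but you execute it with considerably more care than the published argument. The paper's own proof is a single sentence: it notes that equation \eqref{eq:approximation} holds for each $\theta\in\Theta$ and then asserts the conclusion on the grounds that almost sure convergence is preserved under almost surely continuous transformations, citing Serfling. That is the weak point of the paper's version: the map from an objective function to its $\arg\sup$ is not a continuous transformation of the function's values at any fixed $\theta$, so pointwise a.s.\ convergence of the difference of log-likelihoods does not by itself force the two maximizers together. Your proof supplies exactly the machinery that is missing---a uniform strong law on compacta (which needs an integrable envelope for $\log L_{1}\left(\bullet;T_{1}\right)$, a hypothesis the lemma does not state), an argmax-continuity theorem, and a tightness condition keeping both maximizer sequences inside a common compact set (also not guaranteed by the stated hypotheses when $\Theta$ is unbounded, as it is in the case studies where $\Theta=\left[0,\infty\right)$ or $\mathbb{R}$). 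In short, yours is the honest version of the same argument: identical skeleton, but it makes explicit the envelope-integrability and compactness/coercivity conditions under which the conclusion actually follows, conditions that the paper's appeal to the continuous mapping theorem quietly assumes away.
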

\begin{proof}
By Lemma \ref{lem:approximation}, equation \eqref{eq:approximation}
holds for all $\theta\in\Theta$. Thus, since almost sure convergence
is preserved under almost surely continuous transformations \citep[§1.7]{Serfling:1254212},
\\
$\arg\sup_{\theta\in\Theta}\tilde{L}\left(\theta;\tilde{\mathbf{T}}\left(t\right)\right)-\arg\sup_{\theta\in\Theta}\bar{L}_{i}\left(\theta;\mathbf{T}\right)\overset{\text{a.s.}}{\rightarrow}0.$\end{proof}
\begin{thm}
\label{thm:approximate-complexity}Under the assumptions of Lemma
\ref{lem:MLEs}, the difference between the approximate and exact
parametric complexities almost surely vanishes: \begin{equation}
\int_{\mathcal{T}_{1}}\tilde{L}\left(\hat{\theta}\left(\tilde{\mathbf{T}}\left(t\right)\right);\tilde{\mathbf{T}}\left(t\right)\right)dt-\int_{\mathcal{T}_{i}}\bar{L}_{i}\left(\hat{\theta}_{i}\left(\mathbf{T}_{i}\left(t\right)\right);\mathbf{T}_{i}\left(t\right)\right)dt\overset{\text{a.s.}}{\rightarrow}0.\label{eq:approximate-complexity}\end{equation}
\end{thm}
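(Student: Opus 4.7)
My plan is to rewrite the difference in \eqref{eq:approximate-complexity} as a single integral over $\mathcal{T}_{1}$ whose integrand vanishes pointwise almost surely, and then extract an $N$-independent bound by factoring out the exact parametric complexity. By the third equal weight condition, $\mathcal{T}_{i}=\mathcal{T}_{1}$, so the left-hand side of \eqref{eq:approximate-complexity} equals $\int_{\mathcal{T}_{1}}\Delta_{N}(t)\,dt$ with $\Delta_{N}(t):=\tilde{L}(\hat{\theta}(\tilde{\mathbf{T}}(t));\tilde{\mathbf{T}}(t))-\bar{L}_{i}(\hat{\theta}_{i}(\mathbf{T}_{i}(t));\mathbf{T}_{i}(t))$.

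For each fixed $t\in\mathcal{T}_{1}$, I would establish $\Delta_{N}(t)\overset{\text{a.s.}}{\rightarrow}0$ by telescoping the log-difference as $\bigl[\log\tilde{L}(\hat{\theta}(\tilde{\mathbf{T}}(t));\tilde{\mathbf{T}}(t))-\log\tilde{L}(\hat{\theta}_{i}(\mathbf{T}_{i}(t));\tilde{\mathbf{T}}(t))\bigr]+\bigl[\log\tilde{L}(\hat{\theta}_{i}(\mathbf{T}_{i}(t));\tilde{\mathbf{T}}(t))-\log\bar{L}_{i}(\hat{\theta}_{i}(\mathbf{T}_{i}(t));\mathbf{T}_{i}(t))\bigr]$. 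The second summand vanishes a.s.\ by Lemma \ref{lem:approximation}, and the first vanishes by Lemma \ref{lem:MLEs} combined with the stipulated a.s.\ continuity of $L_{1}(\cdot;T_{j})$, which is inherited by $\log\tilde{L}(\cdot;\tilde{\mathbf{T}}(t))$ as a finite linear combination. The continuous mapping theorem applied to $\exp$ then yields $\Delta_{N}(t)\overset{\text{a.s.}}{\rightarrow}0$. One technical subtlety is that Lemma \ref{lem:approximation} is stated for fixed $\theta$, whereas here we evaluate at the data-dependent argmax $\hat{\theta}_{i}(\mathbf{T}_{i}(t))$; this gap is closed by a uniform law of large numbers on a compact parameter set, justified by the explicit $1/(N-1)$ factor that emerges from the algebraic identity in the proof of Lemma \ref{lem:approximation}.

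The main obstacle is the interchange of this almost-sure limit with the integration over $\mathcal{T}_{1}$, since the integrand is not manifestly dominated by an $N$-independent integrable envelope. I would avoid this difficulty by factoring $\tilde{L}(\hat{\theta}(\tilde{\mathbf{T}}(t));\tilde{\mathbf{T}}(t))=\bar{L}_{i}(\hat{\theta}_{i}(\mathbf{T}_{i}(t));\mathbf{T}_{i}(t))\,e^{r_{N}(t)}$, where $r_{N}(t)$ is the log-ratio shown above to vanish a.s.\ and, by the same uniform-LLN argument, to vanish uniformly in $t$. Then $\int_{\mathcal{T}_{1}}\Delta_{N}(t)\,dt=\int_{\mathcal{T}_{1}}\bar{L}_{i}(\hat{\theta}_{i}(\mathbf{T}_{i}(t));\mathbf{T}_{i}(t))\bigl(e^{r_{N}(t)}-1\bigr)\,dt$ is bounded in absolute value by $\sup_{t}\bigl|e^{r_{N}(t)}-1\bigr|$ times the exact parametric complexity in \eqref{eq:NMWL}, which is finite by the hypothesis of Theorem \ref{thm:Shtarkov} and vanishes a.s.\ as $N\to\infty$.
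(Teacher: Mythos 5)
Your proposal follows the paper's own core strategy --- combine Lemma \ref{lem:approximation} and Lemma \ref{lem:MLEs} to get pointwise (in $t$) almost sure convergence of the integrands --- but then goes a step further than the paper does. The paper's proof is a single sentence: it asserts the pointwise convergence $\tilde{L}(\hat{\theta}(\tilde{\mathbf{T}}(t));\tilde{\mathbf{T}}(t))-\bar{L}_{i}(\hat{\theta}_{i}(\mathbf{T}_{i}(t));\mathbf{T}_{i}(t))\overset{\text{a.s.}}{\rightarrow}0$ and stops, leaving the interchange of the almost sure limit with the integration over $\mathcal{T}_{1}$ entirely unaddressed. You correctly identify this interchange as the real obstacle, and your factoring $\tilde{L}=\bar{L}_{i}\,e^{r_{N}}$ with the bound $\sup_{t}\lvert e^{r_{N}(t)}-1\rvert$ times the exact complexity is a sensible way to close it; your telescoping of the log-difference also makes explicit the evaluation-at-a-random-argmax issue that the paper papers over with the phrase ``the functions are almost surely continuous by assumption.''

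That said, two of your own steps are asserted rather than established. First, the uniformity of $r_{N}(t)$ in $t$: the second telescoping term is indeed uniform in $t$ (in fact the $t$-dependent terms carry identical weights $\tilde{w}_{N+1}=w_{ii}$ and cancel exactly, so that bracket does not depend on $t$ at all), but the first bracket depends on $t$ through both argmaxes, and controlling $\log\tilde{L}(\hat{\theta}(\tilde{\mathbf{T}}(t));\tilde{\mathbf{T}}(t))-\log\tilde{L}(\hat{\theta}_{i}(\mathbf{T}_{i}(t));\tilde{\mathbf{T}}(t))$ uniformly in $t$ requires equicontinuity of the family $\{\log\tilde{L}(\cdot;\tilde{\mathbf{T}}(t))\}_{N}$ near the limiting argmax, not mere continuity; your appeal to a uniform LLN ``on a compact parameter set'' is also not licensed by the hypotheses, since $\Theta$ is not assumed compact. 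Second, your final bound needs the exact parametric complexity $\int_{\mathcal{T}_{i}}\bar{L}_{i}(\hat{\theta}_{i}(\mathbf{T}_{i}(t));\mathbf{T}_{i}(t))\,dt$ to be bounded in $N$, not merely finite for each fixed $N$ as guaranteed by the hypothesis of Theorem \ref{thm:Shtarkov}. These are genuine loose ends, but they are loose ends the paper's proof shares (and indeed hides); your version at least exposes where the additional regularity would have to enter.
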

\begin{proof}
Combining the results of Lemmas \ref{lem:approximation} and \ref{lem:MLEs}
gives\[
\tilde{L}\left(\hat{\theta}\left(\tilde{\mathbf{T}}\left(t\right)\right);\tilde{\mathbf{T}}\left(t\right)\right)-\bar{L}_{i}\left(\hat{\theta}_{i}\left(\mathbf{T}_{i}\left(t\right)\right);\mathbf{T}_{i}\left(t\right)\right)\overset{\text{a.s.}}{\rightarrow}0\]
for all $t\in\mathcal{T}_{i}$ since $\mathcal{T}_{i}=\mathcal{T}_{1}$
and the functions are almost surely continuous by assumption.
\end{proof}

\subsection{\label{sub:Optimal-discrimination-information}Optimal discrimination
information}

For any $\Theta^{\prime}\subseteq\Theta$, let $\bar{g}_{i}\left(t_{i};\Theta^{\prime}\right)$
denote the optimal predictive density function relative to $\left\langle \left\{ g_{\theta}:\theta\in\Theta^{\prime}\right\} ,w_{i}\right\rangle $
as defined in Section \ref{sub:NMWL}. For any $\Theta_{0},\Theta_{1}\subseteq\Theta$,
the \emph{optimal information in $x$ for discrimination} in favor
of the hypothesis that $\theta_{i}\in\Theta_{1}$ over the hypothesis
that $\theta_{i}\in\Theta_{0}$ is\[
\bar{I}_{i}\left(\Theta_{1},\Theta_{0}\right)=-\log\bar{g}_{i}\left(t_{i};\Theta_{0}\right)-\left(-\log\bar{g}_{i}\left(t_{i};\Theta_{1}\right)\right),\]
generalizing quantities in \citet{RefWorks:292}, \citet{RefWorks:342}, \citet{RefWorks:435}, and \citet{mediumScale}.
The approximate optimal information $\tilde{I}_{i}\left(\Theta_{1},\Theta_{0}\right)$
is defined identically except with $\tilde{g}_{i}$ in place of $\bar{g}_{i}$.
$\bar{I}_{i}\left(\Theta_{1},\Theta_{0}\right)$ is not restricted
to the case of smoothness conditions on $\left\{ g_{\theta}:\theta\in\Theta\right\} $,
but applies to any problem of selecting one of two models.

Since $\bar{g}_{i}\left(t_{i};\Theta_{1}\right)/\bar{g}_{i}\left(t_{i};\left\{ \theta_{0}\right\} \right)$
for $\theta_{0}\in\Theta$is a likelihood ratio, the discrimination
information has the universal bound on the probability of misleading
evidence under $\theta=\theta_{0}$ (\citealp{RefWorks:123}; \citealp{mediumScale}).
The next lemma and theorem bear on whether the optimal information
for discrimination is an interpretable measure of evidence in that
the probability of observing misleading information converges to 0
as $n_{i}\rightarrow\infty$. Let $\hat{\theta}_{i}\left(\mathbf{T};\Theta^{\prime}\right)=\arg\sup_{\theta\in\Theta^{\prime}}\bar{L}_{i}\left(\theta;\mathbf{T}\right)$
for $i=1,\dots,N$ and $\hat{\theta}_{0}\left(t;\Theta^{\prime}\right)=\arg\sup_{\theta\in\Theta^{\prime}}L_{i}\left(\theta;t\right)$
given any $\Theta^{\prime}\subseteq\Theta$.
\begin{lem}
\label{lem:complexity}Suppose $\Theta=\mathbb{R}^{D}$, $\theta=\left\langle \theta_{1},\dots,\theta_{D}\right\rangle ^{\T}$,
$n_{j}=O\left(n_{k}\right)$ for all $j,k\in\left\{ 1,\dots,N\right\} $,
$\mathcal{G}_{n,i}=\mathcal{G}_{n,1}$ and $L_{i}=L_{1}$ for all
$i\in\left\{ 1,\dots,N\right\} $ and sufficiently large $n$, and
$\tau\left(x\right)=x$ for all $x\in\mathcal{X}^{n}$, which implies
that $\mathcal{T}_{i}=\mathcal{X}^{n}$, $t_{i}=x_{i}$, and $T_{i}=X_{i}$.
Assume also that for some $i\in\left\{ 1,\dots,N\right\} $, there
exists an open, bounded set $\Theta^{\prime}\subseteq\Theta$ on which
$L\left(\bullet;X_{i}\right)$ is almost surely continuous and such
that \begin{equation}
\log\int_{\mathcal{X}^{n}}\bar{L}_{i}\left(\hat{\theta}_{0}\left(t;\Theta^{\prime}\right);t\right)dt=\frac{D}{2}\log\frac{n_{i}}{2\pi}+\log\int_{\Theta^{\prime}}\sqrt{\frac{1}{n_{i}}\left|E\frac{\partial^{2}\ln g_{\theta}\left(X_{i}\right)}{\partial\theta\partial\theta^{\T}}\right|}d\theta+o\left(1\right).\label{eq:complexity}\end{equation}
\[
\therefore\log\int_{\mathcal{T}_{i}}\bar{L}_{i}\left(\hat{\theta}_{i}\left(\mathbf{T}_{i}\left(t\right);\Theta^{\prime}\right);\mathbf{T}_{i}\left(t\right)\right)dt=\frac{D}{2}\log\frac{n_{i}}{2\pi}+\log\int_{\Theta^{\prime}}\sqrt{\frac{1}{n_{i}}\left|E\frac{\partial^{2}\ln g_{\theta}\left(X_{i}\right)}{\partial\theta\partial\theta^{\T}}\right|}d\theta+o\left(1\right)\]
almost surely holds for any weights that satisfy $P\left(\lim_{n_{i}\rightarrow\infty}w_{ii}=1\right)=1$
and $\sum_{j=1}^{N}w_{ij}=1$.\end{lem}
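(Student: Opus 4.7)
Since the right-hand side of the conclusion coincides with that of the assumption, it suffices to prove that the logarithms of the two integrals on the left-hand sides differ by $o(1)$ almost surely. Write $I_0:=\int_{\mathcal{X}^{n_i}}\bar{L}_i(\hat{\theta}_0(t;\Theta^{\prime});\mathbf{T}_i(t))\,dt$ for the integral in the assumption and $I_1:=\int_{\mathcal{T}_i}\bar{L}_i(\hat{\theta}_i(\mathbf{T}_i(t);\Theta^{\prime});\mathbf{T}_i(t))\,dt$ for the integral in the conclusion; the domains coincide because $\tau(x)=x$. By definition $\hat{\theta}_i(\mathbf{T}_i(t);\Theta^{\prime})$ maximizes $\bar{L}_i(\cdot;\mathbf{T}_i(t))$, so the integrands satisfy $\bar{L}_i(\hat{\theta}_i;\mathbf{T}_i(t))\ge \bar{L}_i(\hat{\theta}_0(t);\mathbf{T}_i(t))$ pointwise in $t$, giving the trivial lower bound $\log I_1\ge\log I_0$. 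The plan is to establish a matching upper bound $\log I_1\le\log I_0+o(1)$ almost surely.

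To control the excess I decompose via \eqref{eq:weighted-likelihood},
\[\log\bar{L}_i(\theta;\mathbf{T}_i(t))=w_{ii}\log L_1(\theta;t)+\sum_{j\ne i}w_{ij}\log L_1(\theta;T_j),\]
and subtract the corresponding quantity at $\theta=\hat{\theta}_0(t;\Theta^{\prime})$. Because $\hat{\theta}_0(t;\Theta^{\prime})$ maximizes $L_1(\cdot;t)$, the first summand contributes a nonpositive amount, leaving only the nuisance summand $\sum_{j\ne i}w_{ij}\bigl[\log L_1(\hat{\theta}_i;T_j)-\log L_1(\hat{\theta}_0(t);T_j)\bigr]$. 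Since $\Theta^{\prime}$ is open and bounded and $L_1(\cdot;T_j)$ is almost surely continuous on its closure, the random constant $M_j:=\sup_{\theta\in\overline{\Theta^{\prime}}}|\log L_1(\theta;T_j)|$ is almost surely finite; crucially, $M_j$ depends on $T_j$ but not on the integration variable $t$, so each bracketed difference is bounded in absolute value by $2M_j$ uniformly in $t$.

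The hypotheses $\sum_{j=1}^N w_{ij}=1$ and $P(\lim_{n_i\to\infty}w_{ii}=1)=1$ yield $\sum_{j\ne i}w_{ij}=1-w_{ii}\to 0$ almost surely; combined with the stipulation $w_{ii}\ge w_{ij}$, which forces the nuisance weights to be eventually nonnegative, this produces a random $\delta_{n_i}\to 0$ almost surely that bounds the nuisance excess uniformly in $t$. Hence $\bar{L}_i(\hat{\theta}_i;\mathbf{T}_i(t))\le e^{\delta_{n_i}}\bar{L}_i(\hat{\theta}_0(t);\mathbf{T}_i(t))$ for every $t$; integrating gives $I_1\le e^{\delta_{n_i}}I_0$, so $\log I_1\le\log I_0+\delta_{n_i}=\log I_0+o(1)$ almost surely. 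Combining with the lower bound and the assumed Rissanen-type expansion of $\log I_0$ delivers the claim.

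The principal obstacle is the uniformity in $t$: since $\hat{\theta}_i(\mathbf{T}_i(t);\Theta^{\prime})$ varies with $t$, the bracketed difference cannot be bounded by a quantity that shrinks with $\hat{\theta}_i-\hat{\theta}_0(t)$ alone, only by the supremum of $|\log L_1(\cdot;T_j)|$ over $\Theta^{\prime}$. The open, bounded nature of $\Theta^{\prime}$ combined with almost sure continuity of $L_1(\cdot;T_j)$ renders this supremum almost surely finite and independent of $t$, which is precisely what the sandwich argument requires; a secondary technicality is verifying that the weight-ordering constraint $w_{ii}\ge w_{ij}$ together with $w_{ii}\to 1$ prevents the nuisance weights from changing sign in a way that would break $\sum_{j\ne i}|w_{ij}|\to 0$.
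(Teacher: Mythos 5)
Your sandwich strategy is genuinely different from the paper's proof, which simply asserts the pointwise almost-sure convergence $\bar{L}_{i}\left(\hat{\theta}_{i}\left(\mathbf{T}_{i}\left(t\right);\Theta^{\prime}\right);\mathbf{T}_{i}\left(t\right)\right)\overset{\text{a.s.}}{\rightarrow}\bar{L}_{i}\left(\hat{\theta}_{0}\left(t;\Theta^{\prime}\right);t\right)$ for each $t$ and leaves the interchange of limit and integral implicit; your attempt to obtain a bound that is uniform in $t$ is exactly the right instinct for making that step honest. Unfortunately the key quantitative step fails. You treat $M_{j}=\sup_{\theta\in\overline{\Theta^{\prime}}}\left|\log L_{1}\left(\theta;T_{j}\right)\right|$ as a random constant, but the asymptotics are taken as $n_{i}\rightarrow\infty$ with $n_{j}=O\left(n_{k}\right)$ for all $j,k$, so each $T_{j}$ is recomputed from a sample whose size grows proportionally to $n_{i}$; since $\log L_{1}\left(\theta;T_{j}\right)$ is a log-likelihood of $n_{j}$ observations, $M_{j}$ typically grows like $n_{j}\asymp n_{i}$. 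Your bound on the nuisance excess is therefore of order $\left(1-w_{ii}\right)n_{i}$, and the hypotheses give only $w_{ii}\rightarrow1$, not $1-w_{ii}=o\left(1/n_{i}\right)$; for the paper's own single-observation weights $1-w_{ii}=\left(n_{i}+1\right)^{-1}$, so your bound tends to a nonzero constant and $\delta_{n_{i}}\rightarrow0$ is false. The same rate problem affects the term you discard at the other end: to connect your $I_{0}=\int\bar{L}_{i}\left(\hat{\theta}_{0}\left(t;\Theta^{\prime}\right);\mathbf{T}_{i}\left(t\right)\right)dt$ to the integral in \eqref{eq:complexity} --- which, for the cited Rissanen expansion to apply, must be the unweighted normalizer $\int L_{i}\left(\hat{\theta}_{0}\left(t;\Theta^{\prime}\right);t\right)dt$, the very limit the paper's proof names --- you would need $\left(w_{ii}-1\right)\log L_{i}\left(\hat{\theta}_{0}\left(t\right);t\right)=o\left(1\right)$ uniformly in $t$, again a quantity of order $\left(1-w_{ii}\right)n_{i}$. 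So the sandwich both fails to close and is wrapped around the wrong middle term.

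Two smaller points. Finiteness of $M_{j}$ does not follow from the stated hypotheses: continuity of $L_{1}\left(\bullet;X_{j}\right)$ is assumed only on the open set $\Theta^{\prime}$, not on its closure, and $\log L_{1}=-\infty$ wherever the likelihood vanishes. Also, $w_{ii}\ge w_{ij}$ together with $\sum_{j}w_{ij}=1$ and $w_{ii}\rightarrow1$ yields $\sum_{j\ne i}w_{ij}\rightarrow0$ but not $\sum_{j\ne i}\left|w_{ij}\right|\rightarrow0$, since the off-focus weights have no lower bound; you flag this as a technicality, but it cannot be repaired from the stated assumptions alone. In fairness, the paper's one-line proof glosses over the same $O\left(\left(1-w_{ii}\right)n_{i}\right)$ terms, so a fully rigorous argument would require either a strengthened rate condition on the weights or a direct Laplace-type analysis of the weighted normalizer; but as written, your claim that $\delta_{n_{i}}\rightarrow0$ is a step that would fail.
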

\begin{proof}
The continuity condition and the constraints on the weights and sample
sizes ensure that $\bar{L}_{i}\left(\hat{\theta}_{i}\left(\mathbf{T}_{i}\left(t\right);\Theta^{\prime}\right);\mathbf{T}_{i}\left(t\right)\right)\overset{\text{a.s.}}{\rightarrow}\bar{L}_{i}\left(\hat{\theta}_{0}\left(t;\Theta^{\prime}\right);t\right)$
as $n_{i}\rightarrow\infty$ for all $t\in\mathcal{T}_{i}$.
\end{proof}
The assumptions of Lemma \ref{lem:complexity} are broadly applicable
since equation \eqref{eq:complexity} holds under general regularity
conditions \citep{RefWorks:396}. The result will now be extended
to non-bounded parameter spaces.
\begin{thm}
\label{thm:unbounded-complexity}Suppose that $\Theta_{1}\subseteq\Theta$,
that $n_{j}=O\left(n_{k}\right)$ for all $j,k\in\left\{ 1,\dots,N\right\} $,
and that $P$ is the sampling distribution of $\mathbf{T}$. Assume
also that for any $i\in\left\{ 1,\dots,N\right\} $, there exists
an open, bounded set $\Theta^{\prime}\subseteq\Theta_{1}$ such that
\begin{equation}
P\left(\lim_{n_{i}\rightarrow\infty}\log\int_{\mathcal{T}_{i}}\bar{L}_{i}\left(\hat{\theta}_{i}\left(\mathbf{T}_{i}\left(t\right);\Theta^{\prime}\right);\mathbf{T}_{i}\left(t\right)\right)dt=\infty\right)=1.\label{eq:unbounded-complexity}\end{equation}
\[
\therefore P\left(\lim_{n_{i}\rightarrow\infty}\log\int_{\mathcal{T}_{i}}\bar{L}_{i}\left(\hat{\theta}_{i}\left(\mathbf{T}_{i}\left(t\right);\Theta_{1}\right);\mathbf{T}_{i}\left(t\right)\right)dt=\infty\right)=1.\]
\end{thm}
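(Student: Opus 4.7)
The plan is to exploit the fact that the integrand is monotone nondecreasing in the parameter space over which the weighted likelihood is maximized, so that enlarging $\Theta'$ to $\Theta_1$ can only make the parametric complexity larger, and divergence is therefore preserved.

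First I would fix any $t\in\mathcal{T}_i$ and observe that, since $\Theta'\subseteq\Theta_1$, the supremum of $\bar{L}_i(\,\cdot\,;\mathbf{T}_i(t))$ over $\Theta_1$ dominates that over $\Theta'$; equivalently,
\[
\bar{L}_i\!\left(\hat{\theta}_i(\mathbf{T}_i(t);\Theta');\mathbf{T}_i(t)\right)\;\le\;\bar{L}_i\!\left(\hat{\theta}_i(\mathbf{T}_i(t);\Theta_1);\mathbf{T}_i(t)\right).
\]
Both sides are nonnegative and measurable in $t$, so I would integrate this pointwise inequality over $\mathcal{T}_i$ (the integral on the right may be $+\infty$, but that does not obstruct the inequality) to obtain
\[
\int_{\mathcal{T}_i}\bar{L}_i\!\left(\hat{\theta}_i(\mathbf{T}_i(t);\Theta');\mathbf{T}_i(t)\right)dt\;\le\;\int_{\mathcal{T}_i}\bar{L}_i\!\left(\hat{\theta}_i(\mathbf{T}_i(t);\Theta_1);\mathbf{T}_i(t)\right)dt.
\]

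Next I would apply $\log$, which is monotone, and then observe that the hypothesis \eqref{eq:unbounded-complexity} says the left-hand side diverges to $+\infty$ as $n_i\to\infty$ on an event of $P$-probability $1$. On that event, the dominating right-hand side must therefore also diverge to $+\infty$, which is exactly the conclusion. No additional regularity is needed beyond the measurability already built into the setup of Section \ref{sub:Preliminaries}.

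I do not foresee any real obstacle: the argument is purely a monotonicity-in-parameter-space remark combined with integration and the squeeze on the divergent lower bound. The only subtlety worth flagging is to handle the possibility that $\hat{\theta}_i(\mathbf{T}_i(t);\Theta_1)$ may fail to be attained (if $\Theta_1$ is unbounded), but this is harmless because the inequality above is stated in terms of suprema, so one may replace $\bar{L}_i(\hat{\theta}_i(\cdot;\Theta_1);\cdot)$ by $\sup_{\theta\in\Theta_1}\bar{L}_i(\theta;\cdot)$ throughout without affecting the argument or the stated conclusion.
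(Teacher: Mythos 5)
Your argument is correct and is essentially the paper's own proof: both rest on the pointwise domination $\bar{L}_{i}\left(\hat{\theta}_{i}\left(\mathbf{T}_{i}\left(t\right);\Theta^{\prime}\right);\mathbf{T}_{i}\left(t\right)\right)\le\bar{L}_{i}\left(\hat{\theta}_{i}\left(\mathbf{T}_{i}\left(t\right);\Theta_{1}\right);\mathbf{T}_{i}\left(t\right)\right)$ coming from $\Theta^{\prime}\subseteq\Theta_{1}$, followed by integration over $\mathcal{T}_{i}$ and monotonicity of $\log$. The paper merely phrases the same inequality via a decomposition of $\mathcal{T}_{i}$ into the set where the $\Theta_{1}$-maximizer falls in $\Theta^{\prime}$ and its complement; your direct supremum formulation (which also sidesteps attainment issues) is an equivalent and slightly cleaner packaging of the identical idea.
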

\begin{proof}
Let $\mathfrak{T}=\left\{ t\in\mathcal{T}_{i}:\hat{\theta}_{i}\left(\mathbf{T}_{i}\left(t\right);\Theta_{1}\right)\in\Theta^{\prime}\right\} $
to expand $\int_{\mathcal{T}_{i}}\bar{L}_{i}\left(\hat{\theta}_{i}\left(\mathbf{T}_{i}\left(t\right);\Theta_{1}\right);\mathbf{T}_{i}\left(t\right)\right)dt$
as \[
\int_{\mathfrak{T}}\bar{L}_{i}\left(\hat{\theta}_{i}\left(\mathbf{T}_{i}\left(t\right);\Theta_{1}\right);\mathbf{T}_{i}\left(t\right)\right)dt+\int_{\mathcal{T}_{i}\backslash\mathfrak{T}}\bar{L}_{i}\left(\hat{\theta}_{i}\left(\mathbf{T}_{i}\left(t\right);\Theta_{1}\right);\mathbf{T}_{i}\left(t\right)\right)dt.\]
Thus, since $\bar{L}_{i}\left(\hat{\theta}_{i}\left(\mathbf{T}_{i}\left(t\right);\Theta_{1}\right);\mathbf{T}_{i}\left(t\right)\right)=\bar{L}_{i}\left(\hat{\theta}_{i}\left(\mathbf{T}_{i}\left(t\right);\Theta^{\prime}\right);\mathbf{T}_{i}\left(t\right)\right)$
for all $t\in\mathfrak{T}$ and \[
\bar{L}_{i}\left(\hat{\theta}_{i}\left(\mathbf{T}_{i}\left(t\right);\Theta_{1}\right);\mathbf{T}_{i}\left(t\right)\right)>\bar{L}_{i}\left(\hat{\theta}_{i}\left(\mathbf{T}_{i}\left(t\right);\Theta^{\prime}\right);\mathbf{T}_{i}\left(t\right)\right)\]
for all $t$ in non-empty $\mathcal{T}_{i}\backslash\mathfrak{T}$
given any sufficiently large $n_{i}$,\[
\int_{\mathcal{T}_{i}}\bar{L}_{i}\left(\hat{\theta}_{i}\left(\mathbf{T}_{i}\left(t\right);\Theta_{1}\right);\mathbf{T}_{i}\left(t\right)\right)dt\ge\int_{\mathcal{T}_{i}}\bar{L}_{i}\left(\hat{\theta}_{i}\left(\mathbf{T}_{i}\left(t\right);\Theta^{\prime}\right);\mathbf{T}_{i}\left(t\right)\right)dt\]
follows, where the equality and both inequalities hold with $P$-probability
1.
\end{proof}
Since the claim of Lemma \ref{lem:complexity} implies equation \eqref{eq:unbounded-complexity},
Theorem \ref{thm:unbounded-complexity} applies to the wide class
of models satisfying the regularity conditions of \citet{RefWorks:396}.
The largely overlapping regularity conditions of \citet{RefWorks:387}
then ensure that $\lim_{n_{i}\rightarrow\infty}P\left(\bar{I}_{i}\left(\Theta_{1},\left\{ \theta_{0}\right\} \right)>0\right)=0$
when there is no $\theta\in\Theta$ such that $g_{\theta}$ is closer
in Kullback-Leibler divergence than $g_{\theta_{0}}$ to the marginal
distribution $P\left(T_{i}\in\bullet\right)$. In the special case
of correct model specification considered in \citet{RefWorks:123} and \citet{mediumScale},
the equation holds for all $P$ admitting $g_{\theta_{0}}$ as the
marginal density of $T_{i}$.

\subsection{\label{sub:Single-observation-weights}Single-observation weights}

This section defines \emph{single-observation weights} as the components
of $w_{i}$ such that for every $i\in1,\dots,N$ that all incidental
data (all $x_{j}$ with $j\ne i$) together have the weight of one
observation in the focus vector $x_{i}$ $\left(\sum_{j\ne i}w_{ij}=w_{ii}/n_{i}\right)$
and that each comparison other than the $i$th has equal weight $\left(\forall j,k\ne i\, w_{ij}=w_{ik}\right)$.
Solving those equations and $\sum_{j=1}^{N}w_{ij}=1$ uniquely gives
$w_{ii}=1-\left(n_{i}+1\right)^{-1}$ and $i\ne j\implies w_{ij}=\left(n_{i}+1\right)^{-1}\left(N-1\right)^{-1}$. 

If there is only a single comparison, then its observed statistic
$t_{1}=\tau\left(x_{1}\right)$ is supplemented by a pseudo-statistic
$t_{0}$, a scientifically meaningful value in $\mathcal{T}_{1}$
that does not depend on $x_{1}$. For example, $t_{0}$ might be $\int tg_{\theta_{0}}\left(t\right)dt$,
the expectation value of $T_{1}$ under $\theta=\theta_{0}$. (Similarly,
\citet{RefWorks:1147} considered the use of a prior with the Fisher
information of a single pseudo-observation.) The use of $\mathbf{t}=\left\langle t_{0},t_{1}\right\rangle $
and $N=2$ with single-observation weights then entails that \begin{equation}
\log\bar{L}_{1}\left(\theta_{1};\mathbf{t}\right)=\left(n_{1}+1\right)^{-1}\log L_{1}\left(\theta_{1};t_{0}\right)+\left(1-\left(n_{1}+1\right)^{-1}\right)\log L_{1}\left(\theta_{1};t_{1}\right).\label{eq:null-weights}\end{equation}
For a smoother transition from a single comparison to multiple comparisons,
the pseudo-statistic may be assigned the same weight as each of the
$N-1$ incidental statistics among $t_{1},\dots,t_{N}$, i.e., $w_{ij}=\left(n_{i}+1\right)^{-1}N^{-1}$
for all $j\in\left\{ 0,1,\dots,N\right\} \backslash\left\{ i\right\} $.

The following result applies whether there is a single comparison
or multiple comparisons.
\begin{cor}
Assume the components of $w_{i}$ are single-observation weights,
that $n_{i}=n_{1}$ for all $i=1,\dots,N$, and that $\mathcal{G}_{n_{1},i}=\mathcal{G}_{n_{1},1}$
for all $i=1,\dots,N$. If $T_{1},\dots,T_{N}$ are independent and
drawn from a mixture distribution, if $\hat{\theta}\left(\tilde{\mathbf{T}}\left(t\right)\right)$
and $\hat{\theta}_{i}\left(\mathbf{T}_{i}\left(t\right)\right)$ are
almost always unique for all $t\in\mathcal{T}_{i}$, and if $L_{i}\left(\bullet;T_{i}\right)$
is almost surely continuous on $\Theta$ for all $i=1,\dots,N$, then
equation \eqref{eq:approximate-complexity} holds.\end{cor}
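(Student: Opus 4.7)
The plan is to show that the hypotheses of Theorem \ref{thm:approximate-complexity} are satisfied under the stated assumptions, so that the conclusion \eqref{eq:approximate-complexity} is an immediate consequence. Since Theorem \ref{thm:approximate-complexity} invokes Lemmas \ref{lem:approximation} and \ref{lem:MLEs}, the work reduces to confirming (a) the four equal weight conditions and (b) the additional hypotheses appearing in Lemma \ref{lem:MLEs}.

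For step (a), I would substitute the explicit single-observation weights $w_{ii}=1-(n_i+1)^{-1}$ and $w_{ij}=(n_i+1)^{-1}(N-1)^{-1}$ for $i\ne j$. The assumption $n_i=n_1$ removes all $i$-dependence from these formulas, yielding $w_{ii}=1-(n_1+1)^{-1}=w_{1,1}$ and $w_{ij}=(n_1+1)^{-1}(N-1)^{-1}=w_{1,2}$ whenever $i\ne j$, which establishes the first two equal weight conditions. The third condition follows because $n_i=n_1$ forces $\mathcal{T}_i=\mathcal{T}(n_i)=\mathcal{T}(n_1)=\mathcal{T}_1$, and the fourth is the hypothesis $\mathcal{G}_{n_1,i}=\mathcal{G}_{n_1,1}$, which (via the definition of the associated likelihoods) yields $L_i=L_1$ as required.

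For step (b), the independence of $T_1,\dots,T_N$ and their common mixture origin, the almost sure uniqueness of $\hat{\theta}(\tilde{\mathbf{T}}(t))$ and $\hat{\theta}_i(\mathbf{T}_i(t))$ for all $t\in\mathcal{T}_i$, and the almost sure continuity of $L_i(\bullet;T_i)=L_1(\bullet;T_i)$ on $\Theta$ are all directly hypothesized in the corollary, matching the conditions of Lemmas \ref{lem:approximation} and \ref{lem:MLEs} verbatim. With every hypothesis of Theorem \ref{thm:approximate-complexity} thereby in place, its conclusion \eqref{eq:approximate-complexity} follows.

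There is no real obstacle here: the corollary is essentially an instantiation of Theorem \ref{thm:approximate-complexity} to the particular weight scheme introduced in Section \ref{sub:Single-observation-weights}. The only point demanding care is the confirmation that single-observation weights reduce to a common pair $\{w_{1,1},w_{1,2}\}$ across all comparisons, which hinges entirely on the sample-size equality $n_i=n_1$; once this is in hand, the rest is a direct invocation of prior results.
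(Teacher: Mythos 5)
Your proposal is correct and follows the same route as the paper: the paper's proof likewise observes that all hypotheses of Theorem \ref{thm:approximate-complexity} are directly assumed except the equal weight conditions, which follow from the single-observation weight formulas combined with $n_{i}=n_{1}$ and $\mathcal{G}_{n_{1},i}=\mathcal{G}_{n_{1},1}$. Your explicit substitution of $w_{ii}=1-\left(n_{1}+1\right)^{-1}$ and $w_{ij}=\left(n_{1}+1\right)^{-1}\left(N-1\right)^{-1}$ simply spells out a step the paper leaves implicit.
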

\begin{proof}
All the conditions of Theorem \ref{thm:approximate-complexity} are
given except for the equal weights condition, which follows from the
single-observation weights assumption, the equality of the sample
sizes, and the commonality of the family of distributions.
\end{proof}

\section{\label{sec:Case-study}Case studies}

In the following models, $\int f_{\hat{\phi}\left(x\right)}\left(x\right)dx=\infty$,
rendering the unweighted NML \eqref{eq:NML} useless. The NMWL \eqref{eq:NMWL}
can be used instead since $\int_{\mathcal{T}_{i}}\bar{L}_{i}\left(\hat{\theta}_{i}\left(\mathbf{t}_{i}\left(t\right)\right);\mathbf{t}_{i}\left(t\right)\right)dt<\infty$.

Results of two separate NMWL analyses are presented for each application.
The first uses multiple comparisons for inference relevant to each
comparison \eqref{eq:weighted-likelihood}. The second uses $\int tg_{\theta_{0}}\left(t\right)dt=0$
in place of data associated with other comparisons, as if there were
only a single comparison \eqref{eq:null-weights}. All plots use the
binary logarithm to express information in bits and display a different
value for each comparison.

\subsection{\label{sub:Populations}Single and multiple populations}

Before addressing a problem in contemporary biology, the proposed
methodology will be illustrated using a simple data set that has motivated
both Bayesian \citep{Rubin1981b} and weighted likelihood \citep{Wang2006279}
approaches. The reduced data consist of the estimated average effect
of a training program on SAT scores and an estimated standard error
of the effect estimate for each of eight test sites. Following the
tradition continued by \citet{Wang2006279}, the standard errors $\sigma_{1},\dots,\sigma_{8}$
are considered known, and the effect estimates are modeled as normal
observations with unknown means $\theta_{1},\dots,\theta_{8}$. Thus,
$N=8$ and $\left\{ g_{i,\theta_{i}}:\theta\in\Theta\right\} $ is
the family of distributions, where $g_{i,\theta_{i}}$ is the normal
density of mean $\theta_{i}$ and standard deviation $\sigma_{i}$.
For the $i$th site, $\theta_{i}\ne0$ is the alternative hypothesis
and $\theta_{i}=0$ is the null hypothesis.

Fig. \ref{fig:infoSAT} displays $\tilde{I}_{i}\left(\mathbb{R}\backslash\left\{ 0\right\} ,\left\{ 0\right\} \right)$,
the resulting approximate discrimination information, with $\bar{I}_{i}\left(\mathbb{R}\backslash\left\{ 0\right\} ,\left\{ 0\right\} \right)$,
the exact discrimination information. As in Section \ref{sub:Single-observation-weights},
the weight of a single observation is assigned either to 0, the null
hypothesis value ({}``information from null''), or to the incidental
testing sites ({}``information from sites''). The resulting information
values are barely distinguishable. %
\begin{figure}
\includegraphics[scale=0.8]{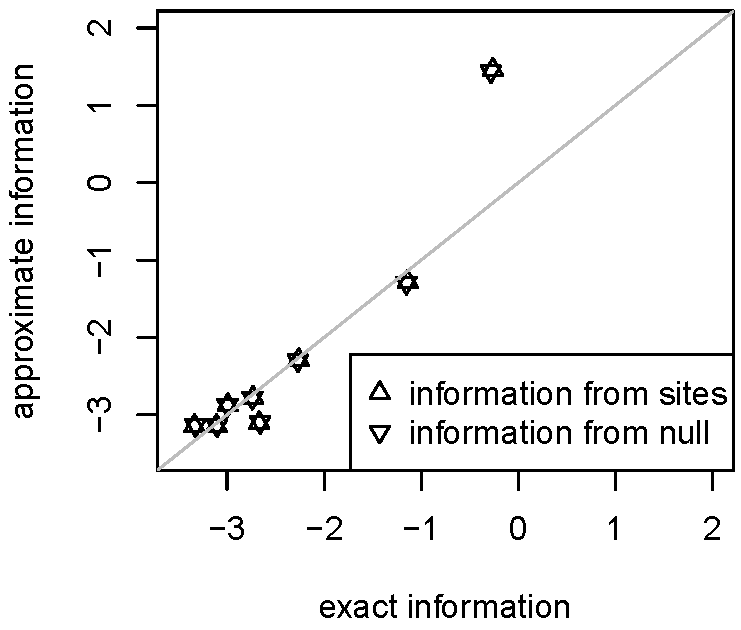}

\caption{\label{fig:infoSAT}Information (bits) favoring the hypothesis that
the test score at a site was affected by the treatment.}

\end{figure}

\subsection{\label{sub:Biological-features}Single and multiple biological features}

In typical experiments measuring gene expression or the abundance
of proteins or metabolites, the primary question is whether the expectation
value of a logarithm of the expression or abundance of each feature
is affected by a treatment, disease, or other perturbation. Since
that question is equivalent to that of whether $\text{CV}_{i}^{-1}$,
the inverse coefficient of variation\emph{ }for the $i$th feature,
is 0, the data reduction strategy of Example \ref{exa:microarray}
often proves effective even if the magnitude of $\text{CV}_{i}^{-1}$
is not of direct interest. $\text{CV}_{i}^{-1}$ has a one-to-one
correspondence to the proportion of the feature-feature pairs with
abundance ratios greater than 1 \citep{RefWorks:21,RefWorks:26}.
In addition, $\text{CV}_{i}^{-1}$ is often of more scientific interest
than the mean since small changes in numbers of biomolecules can have
a strong influence on downstream processes. 

The method of Example \ref{exa:microarray} is applied to the proteomics
data set of Alex Miron\textquoteright{}s lab at the Dana-Farber Cancer
Institute \citep{ProData2009b}, with $x_{ij}$ and $y_{ij}$ as the
logarithms of the abundance levels of the $i$th of $N=20$ proteins
in the $j$th woman with and without breast cancer, respectively,
after the preprocessing of \citet{mediumScale}. Likewise, $\xi_{i}$
and $\eta_{i}$ are the expectation values of the random variables
$X_{ij}$ and $Y_{ij}$. Each of two breast cancer groups (one of
55 HER2-positive women and the other of 35 women mostly-ER/PR-positive)
were compared to a control group of 64 women. Since $\theta_{i}=\left|\text{CV}_{i}^{-1}\right|$
and thus $\Theta=\left[0,\infty\right)$, the competing hypotheses
for the $i$th protein are $\theta_{i}>0$ and $\theta_{i}=0$.

The left panel of Fig. \ref{fig:infoAB} displays the approximate
information for discrimination in favor of the alternative hypothesis
that $\theta_{i}\ne0$ over the null hypothesis that $\theta_{i}=0$
by weighing the incidental proteins as a single observation (§\ref{sub:Single-observation-weights}).
$\tilde{I}_{i}\left(\left(0,\infty\right),\left\{ 0\right\} \right)$,
the approximate optimal information, is compared to $\log\left(g_{i}\left(\hat{\theta}_{\text{MLE}};t_{i}\right)/g_{i}\left(0;t_{i}\right)\right)$.
Here, $\hat{\theta}_{\text{MLE}}$ is common to all proteins, denoting
the maximum likelihood estimate (MLE) defined under the assumptions
that $\theta_{i}\in\left\{ 0,\theta_{\text{alt.}}\right\} $ for some
$\theta_{\text{alt.}}>0$ for all $i$ and that the test statistics
are independent \citep{mediumScale}. The right panel of Fig. \ref{fig:infoAB}
contrasts the widely varying regret of the MLE information with the
constant regret of the optimal information.

\begin{figure}
\includegraphics[scale=0.8]{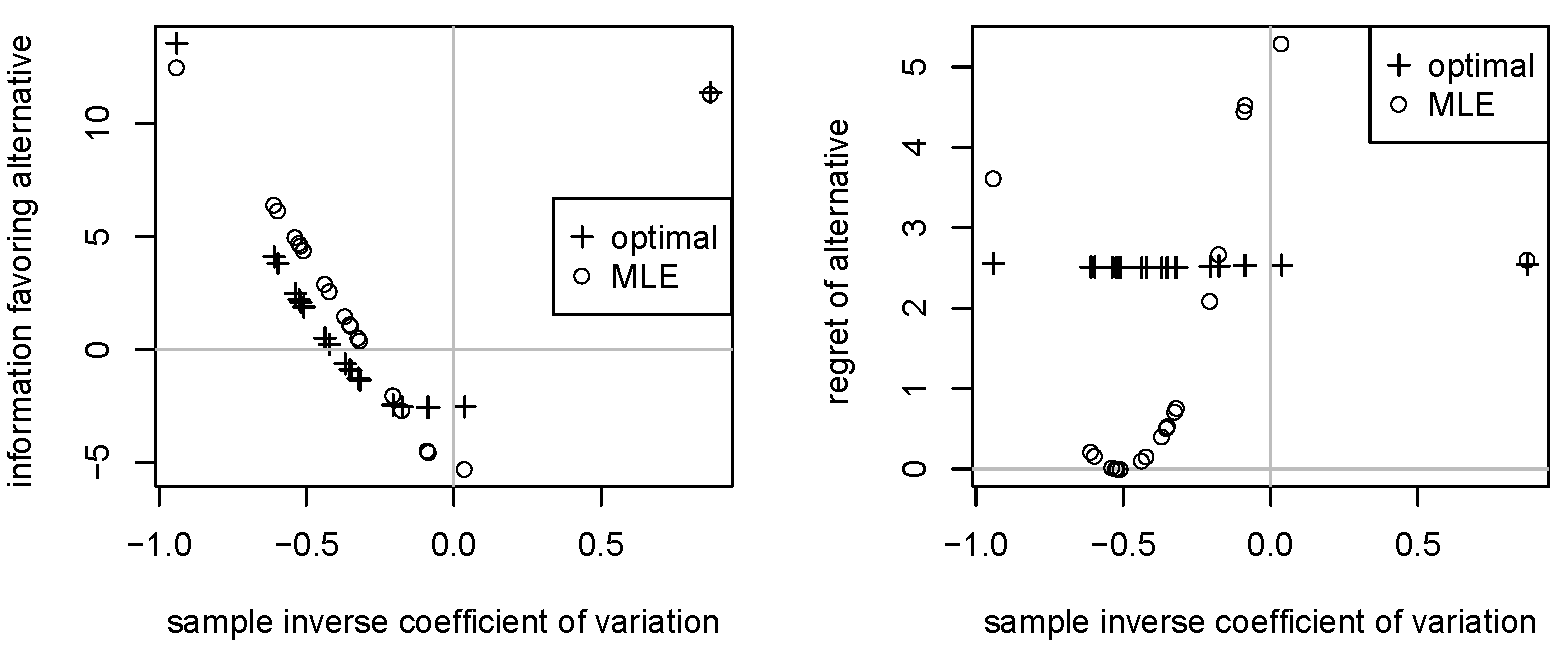}

\caption{\label{fig:infoAB}\emph{Left panel:} Discrimination information (bits)
favoring the hypothesis that the abundance level of a protein differs
by disease status versus $\widehat{\text{CV}}_{i}^{-1}$. \emph{Right
panel:} The corresponding regret versus $\widehat{\text{CV}}_{i}^{-1}$.
($\widehat{\text{CV}}_{i}^{-1}$ denotes the difference in sample
means divided by the sample standard deviation for the $i$th protein.)}

\end{figure}

Giving the null hypothesis the weight of a single observation \eqref{eq:null-weights},
as if the abundance level of only one protein were measured, results
in information values that are visually indistinguishable from those
of Fig. \ref{fig:infoAB}. Nonetheless, some effect of the weighting
method is perceptible for much smaller sample sizes. For example,
Fig. \ref{fig:nullABCB} displays the effect of using the null hypothesis
weights instead of the protein weights on $\tilde{I}_{i}\left(\left(0,\infty\right),\left\{ 0\right\} \right)$
for two randomly selected patients from each breast cancer group and
from the healthy group. Even in this extreme case, only one protein
out of 20 in the right-side panel has a different evidence grade (Table
\ref{tab:Grades-of-hypothesis-evidence}) depending on how the weights
are computed.

\begin{figure}
\includegraphics[scale=0.8]{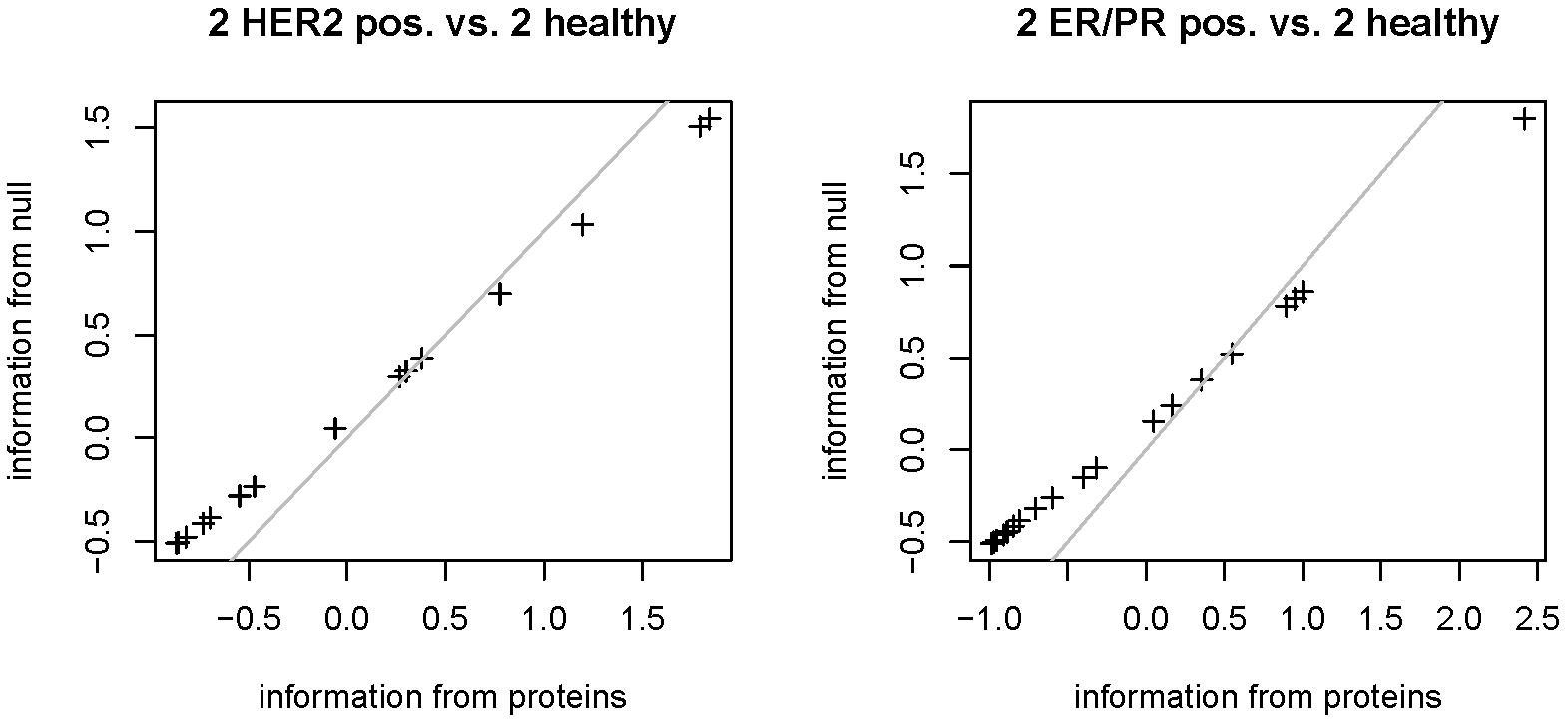}

\caption{\label{fig:nullABCB}Discrimination information (bits) favoring the
hypothesis that the abundance level of a protein differs by disease
status ($n=2$ women per group) using weights from the null hypothesis
versus that using weights from the incidental proteins. }

\end{figure}

\section{\label{sec:Discussion}Discussion}

In both of the case studies of Section \ref{sec:Case-study}, the
use of data associated with comparisons other than the comparison
currently in focus in place of an artificial data point determined
by the null hypothesis has little effect on the information for discrimination.
In the second application, little information was lost for inference
about a single protein were the other 19 absent except when the sample
size was reduced to $n_{i}=4$. Thus, the use of single-observation
weights robustly addresses the infinite-complexity issue with NML
raised in Section \ref{sub:Optimality-observed}. 

The insensitivity to the use of incidental information also suggests
that the NMWL solution to the incidental-information issue raised
in the same section is a measure of evidence that has the same interpretation
for any number of comparisons. By contrast, p-values adjusted to control
error rates and, to a lesser extent, posterior probabilities from
hierarchical Bayesian models, tend to vary so greatly between a single
comparison and a large number of comparisons that they require researchers
to separately build the intuition needed to interpret statistical
reports for small numbers of comparisons, medium numbers of comparisons,
large numbers of comparisons, etc. This shortcoming of traditional
approaches to the multiple comparisons problem is especially glaring
when an article reports various degrees of adjusting p-values for
data types involving very different numbers of features.

As seen in Section \ref{sub:Populations}, the optimal information
for discrimination can indicate strong evidence for a simple null
hypothesis. While in principle the Bayes factor can also favor the
null hypothesis, prior distributions commonly used in practice often
can provide only weak Bayes-factor support for a simple null hypothesis
that corresponds to the data-generating distribution \citep{Johnson2010143}.
The ability of the information for discrimination to indicate whether
the evidence in the data is strongly in favor of the alternative hypothesis,
strongly in favor of the null hypothesis, or insufficient to strongly
favor either hypothesis (Table \ref{tab:Grades-of-hypothesis-evidence})
guards against the prevalent misinterpretation of a high p-value as
evidence for a null hypothesis. More important, the discrimination
information provides scientists a reliable tool designed to objectively
answer the questions they ask of their data.

\section*{Acknowledgments}

The author thanks Corey Yanofsky for comments on the manuscript. \texttt{Biobase}
\citep{RefWorks:161} facilitated data management. This research was
partially supported  by the Canada Foundation for Innovation, by
the Ministry of Research and Innovation of Ontario, and by the Faculty
of Medicine of the University of Ottawa.

\begin{flushleft}
\bibliographystyle{elsarticle-harv}
\bibliography{refman}

\par\end{flushleft}
\end{document}